\let\oldsquare\square
\newcommand{\n}[1]{\left\Vert#1\right\Vert}
\newcommand{\abs}[1]{\left\vert#1\right\vert}
\newcommand{\set}[1]{\left\{#1\right\}}
\newcommand{\R}{\mathbb R}
\newcommand{\C}{\mathbb C}
\newcommand{\Z}{\mathbb Z}
\newcommand{\T}{\mathbb T}
\newcommand{\N}{\mathbb N}
\newcommand{\F}{ {\mathcal F} }
\newcommand{\m}{\mathcal{M}}
\newcommand{\mh}{\widehat{\m}}
\newcommand{\td}{\mathbb{T}^d}
\newcommand{\trs}{\mathbb{T}}
\newcommand{\z}{\Z}
\newcommand{\s}{2\sigma+1}
\def\TagOnRight
\theoremstyle{plain}
\newtheorem{theorem}{Theorem} [section]
\newtheorem{lemma}[theorem]{Lemma}
\newtheorem{corollary}[theorem]{Corollary}
\theoremstyle{remark}
\newtheorem{remark}[theorem]{Remark}
\theoremstyle{definition}
\newtheorem{definition}[theorem]{Definition}
\def\({\left(}
\def\){\right)}
\def\<{\left\langle}
\def\>{\right\rangle}
\numberwithin{equation}{section}
\begin{document}
\title[Norm inflation  for BBM equation]
{Norm inflation for  BBM equation in Fourier amalgam and Wiener amalgam spaces with negative regularity}  

\author[D. Bhimani]{Divyang G. Bhimani}
\address{Department of Mathematics, Indian Institute of Science Education and Research, Dr. Homi Bhabha Road, Pune 411008, India}
\email{divyang.bhimani@iiserpune.ac.in}

\author[S. Haque] {Saikatul Haque}
\address{Harish-Chandra Research Institute, 
India}
\email{saikatulhaque@hri.res.in}

\thanks{}

\subjclass[2010]{35Q55, 35R25 (primary), 42B35 (secondary)}
\keywords{BBM equation; Ill-posedness; Fourier amalgam spaces, Wiener amalgam spaces
  Fourier-Lebesgue spaces; modulation spaces} 
\maketitle
\begin{abstract}
We consider 
Benjamin-Bona-Mahony (BBM)  equation  of the form
$$
u_t+u_x+uu_x-u_{xxt}=0, \quad (x, t)\in \mathcal{M}\times \mathbb R
$$
where  $\mathcal{M}= \mathbb T$ or $\mathbb R.$ We establish 
norm inflation (NI) with infinite loss of regularity at general initial data in  Fourier amalgam and Wiener amalgam spaces with negative regularity.   This  strengthen   several known   NI results  at zero initial data  in $H^s(\mathbb T)$ established by  Bona-Dai (2017) and ill-posedness result established by 
Bona-Tzvetkov (2008) and  Panthee (2011) in $H^s(\mathbb R).$ Our result is sharp with respect to local well-posedness  result of  Banquet-Villamizar-Roa (2021) in  modulation spaces  $M^{2,1}_s(\mathbb R)$ for $s\geq 0$.
\end{abstract}
\section{Introduction}
 We consider 
 Benjamin-Bona-Mahony (BBM)  equation  of the form
\begin{equation}\label{bbm}
\begin{cases}
u_t+u_x+uu_x-u_{xxt}=0\\
u(x,0)=u_0(x)
\end{cases}
\end{equation}
where $u:\mathcal{M}\times \mathbb R\to \mathbb R$ unknown function and $\m= \mathbb T$ or $\mathbb R$.  The BBM  \eqref{bbm} can be written as 
\begin{equation}\label{ebbm}
iu_t= \varphi(D_x)u+ \frac{1}{2} \varphi(D_x)u^2, \quad u(x,0)=u_0(x)
\end{equation}
where $\varphi(\xi)= \frac{\xi}{1+ \xi^2}$, $D_x=\frac{1}{i}\partial_x$ and $\varphi (D_x)$ is the Fourier multiplier operator defined by 
\[  \F[{\varphi (D_x) u}](\xi) = \varphi(\xi) \widehat{u}(\xi).\]

This model BBM \eqref{bbm} is the regularized counterpart of the Korteweg-de Vries (KdV) equation.   This is extensively studied in the literature,  see \cite{alazman2006comparisons,bona1981evaluation,Bona1,pava2011stability,bona1983comparison}.  
BBM equation  \eqref{bbm} is well suited for modeling wave propagation on star graphs.

The aim of this paper is to establish following strong ill-posedness (norm inflation at general initial data with infinite loss of regularity) for \eqref{bbm} in Fourier amalgam $\widehat{w}_s^{p,q}(\m)$ and Wiener amalgam $W^{p, q}_s(\m)$ spaces (to be defined in Section \ref{S2}).
We recall  that 
\begin{equation*} 
   \widehat{w}_s^{p,q}(\m)= \begin{cases}
   \mathcal{F}L_s^q(\m) \  \text{(Fourier-Lebesgue spaces)} \quad \textit{if}  \quad p=q\\
   M^{2,q}_s(\m) \ \text{(modulation spaces)}   \quad \textit{if}  \quad p=2\\
   M^{2,2}_s(\m)=W^{2,2}_s(\m)=H^{s}(\m) \ \text{(Sobolev spaces)} \quad \textit{if} \quad p=q=2\\
   \mathcal{F}L_s^q(\m)=M_s^{p,q}(\m)=W^{p,q}_{s}(\m) \quad \textit{if}    \  \m =\td.
    \end{cases}
    \end{equation*}
These time-frequency  spaces have  proved to be very fruitful in handling various problems in analysis and  made prominent place in  nonlinear dispersive PDEs,  see  e.g.   \cite{wang2007global, wang2011harmonic, bhimani2016functions, B1, B2, B3, ruzhansky2012modulation, kasobenyi, Forlano1}.  
We now state our main theorem.   
\begin{theorem}\label{mt}
Assume that $1\leq p, q\leq \infty, s<0$ and let
\[ X^{p,q}_{s}(\m)=  \begin{cases}\widehat{w}_s^{p,q}(\R) \text{ or }    W_s^{2,q}(\R)\text{ for }\m=\R\\
\F L_s^q(\T)      \text{ for }\m=\T. 
\end{cases} 
  \] 
Then norm inflation with infinite loss of regularity  occurs to \eqref{bbm} everywhere in $X_s^{p,q}(\m)$:
For any $u_0\in X_s^{p,q}(\m)$, $\theta\in\R$ and $\varepsilon>0$ there exist smooth $u_{0,\epsilon} \in X_s^{p,q}(\m)$ and  $T>0$ satisfying  
 \[ \|u_0-u_{0,\epsilon}\|_{X_{s}^{p,q}}< \varepsilon, \quad 0<T< \epsilon\] 
 such that the corresponding smooth solution $u_\epsilon$ to $\eqref{bbm}$ with data $u_{0,\epsilon}$ exists on $[0,T]$ and 
 \[ \|u_\epsilon(T)\|_{X_{\theta}}> \frac{1}{\varepsilon}
 .\] 
In particular,  for any $T>0,$ the solution map $X_s^{p,q}(\m)  \ni u_0\mapsto u \in C([0, T], X_{\theta}^{p,q}(\m))$ for  \eqref{bbm}  is discontinuous everywhere  in $X^{p,q}_{s}(\m)$ for all $\theta\in\mathbb R$.
\end{theorem}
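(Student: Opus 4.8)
The plan is to exploit that the symbol is bounded — $|\varphi(\xi)|\le\tfrac12$ for all $\xi$, with $|\varphi(\pm1)|=\tfrac12$ and $\varphi'(\pm1)=0$ — so that both $e^{-it\varphi(D_x)}$ and $\varphi(D_x)$ act boundedly (with constant $\le1$ on $\F L^1$--type spaces) on all the spaces in play, and the entire norm--inflation mechanism is powered by the quadratic term alone; in particular the output of one nonlinear step lives at \emph{bounded} frequency, which is what forces the loss of regularity to be infinite. By a routine approximation argument it suffices to produce, for smooth $u_0$, data $u_{0,\eps}$ with the asserted properties; the general case and the discontinuity of the solution map then follow.

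\emph{Step 1 (power series).} Writing \eqref{ebbm} in Duhamel form, expand $u=\sum_{k\ge1}u_k$ with $u_1(t)=e^{-it\varphi(D_x)}u_{0,\eps}$ and $u_k(t)=-\tfrac{i}{2}\sum_{j=1}^{k-1}\int_0^t e^{-i(t-\tau)\varphi(D_x)}\varphi(D_x)\bigl(u_j(\tau)u_{k-j}(\tau)\bigr)\,d\tau$. Running this in a suitable algebra $Y$ adapted to the scale (e.g.\ $\F L^1$ for the Fourier--Lebesgue and modulation cases, and $M^{2,1}$ for the $W^{2,q}_s(\R)$ scale — each an algebra on which $\varphi(D_x)$ and $e^{-it\varphi(D_x)}$ are bounded) and using $\|u_1(t)\|_Y\simeq\|u_{0,\eps}\|_Y=:A$, one gets $\|u_k(t)\|_Y\lesssim A^k(Ct)^{k-1}$; hence the series converges in $Y$ for $t\lesssim 1/A$, by the local theory in $Y$ and persistence of regularity it coincides there with the smooth solution of \eqref{bbm} with data $u_{0,\eps}$, and $\bigl\|\sum_{k\ge3}u_k(t)\bigr\|_Y\lesssim A^3t^2$. \emph{Step 2 (the data).} Given $\eps>0$ and $\theta\in\R$, set $u_{0,\eps}=u_0+v_0$ where $v_0$ is a real, high--frequency, large--amplitude bump localized near the \emph{pair} $\{\pm N,\pm(N+1)\}$: on $\T$, $v_0=2R\bigl(\cos(Nx)+\cos((N+1)x)\bigr)$; on $\R$, $\widehat{v_0}=R\sum_{\pm}\bigl(\psi(\cdot\mp N)+\psi(\cdot\mp(N+1))\bigr)$ for a fixed $\psi\in C_c^\infty$. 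Since $s<0$, $\|v_0\|_{X^{p,q}_s}\lesssim R\langle N\rangle^s$, made $<\eps/2$ by taking $N$ large relative to $R/\eps$; and $A=\|u_{0,\eps}\|_Y\le\|u_0\|_Y+CR\simeq R$ once $R$ is large.

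\emph{Step 3 (inflation from the second iterate).} This is the crux. From $\widehat{u_1}(\tau,\eta)=e^{-i\tau\varphi(\eta)}\widehat{u_{0,\eps}}(\eta)$ one has $\widehat{u_2(T)}(\xi)=-\tfrac{i}{2}\varphi(\xi)\int_0^T e^{-i(T-\tau)\varphi(\xi)}\widehat{u_1(\tau)^2}(\xi)\,d\tau$, and on a fixed small interval $I'$ around $\xi=1$ the only frequency interactions of $v_0$ with itself that land in $I'$ are the \emph{resonant} ones $(N+1)+(-N)=1$ (and its mirror), contributing $\widehat{u_1(\tau)^2}(\xi)=2R^2 e^{-i\tau\rho(\xi)}(\psi*\psi)(\xi-1)+(\text{terms }\le C(u_0,\theta)\text{, }R\text{--independent})$ with $|\rho(\xi)|=O(N^{-2})$ — the decisive smallness coming from $\varphi(N+1)-\varphi(N)=O(N^{-2})$. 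Because $\varphi(1)=\tfrac12$ and $\varphi'(1)=0$, the $\tau$--integral has modulus $\gtrsim T$, so $|\widehat{u_2(T)}(\xi)|\gtrsim R^2T-C(u_0)$ on $I'$; and since $I'$ is \emph{bounded}, $\langle\xi\rangle^\theta\simeq1$ there, giving
\[ \|u_2(T)\|_{X^{p,q}_\theta}\ \gtrsim_\theta\ R^2T-C(u_0)\qquad\text{for every }\theta\in\R. \]
Meanwhile $\widehat{u_1(T)}$ restricted to $I'$ equals $e^{-iT\varphi}\widehat{u_0}$ (as $\widehat{v_0}$ lives near $\pm N$, disjoint from $I'$ for $N$ large), a fixed function, and $\|\1_{I'}\F(\sum_{k\ge3}u_k(T))\|_{X^{p,q}_\theta}\lesssim_\theta\bigl\|\sum_{k\ge3}u_k(T)\bigr\|_Y\lesssim A^3T^2$, using the tail bound and the embedding $Y\hookrightarrow X^{p,q}_\theta$ after localization to $I'$. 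Altogether $\|u_\eps(T)\|_{X^{p,q}_\theta}\gtrsim_\theta R^2T-A^3T^2-C(u_0,\theta)$.

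\emph{Step 4 (parameters, and the obstacle).} Take $T=\delta/A$ with $\delta$ a small absolute constant: then $R^2T\simeq R\delta$ beats $A^3T^2\simeq R\delta^2$, and $T<\eps$ automatically once $R$ is large. So $\|u_\eps(T)\|_{X^{p,q}_\theta}\gtrsim_\theta R\delta$, which we make $>1/\eps$ by choosing $R$ large in terms of $\eps,\theta,u_0$, after which $N$ is chosen large in terms of $R,\eps,s$ to keep $\|v_0\|_{X^{p,q}_s}<\eps/2$; $u_{0,\eps}$ is smooth, $u_{0,\eps}\to u_0$ in $X^{p,q}_s$ while $\|u_\eps(T)\|_{X^{p,q}_\theta}\to\infty$, which also yields the claimed everywhere--discontinuity of the solution map. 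I expect the main obstacle to be Step 3: computing $\widehat{u_2(T)}$ precisely and checking that no cancellation kills the $O(R^2T)$ resonant contribution at frequency $\approx1$ — here the fine structure of $\varphi$ (bounded by $\tfrac12$, attaining $\tfrac12$ at $\pm1$, and flat there) together with the arithmetic of the pair $\{N,N+1\}$ are essential. A secondary, more technical burden is the function--space bookkeeping for the Wiener and modulation scales (the choice of the auxiliary algebra $Y$, boundedness of $\varphi(D_x)$ and $e^{-it\varphi(D_x)}$ on $Y$, and $Y\hookrightarrow X^{p,q}_\theta$ after frequency localization), for which the machinery of Section \ref{S2} is used.
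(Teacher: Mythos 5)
Your proposal is correct and follows essentially the same route as the paper: a Picard/power-series expansion in a Fourier algebra, perturbation of $u_0$ by an amplitude-$R$ bump at frequencies near $\pm N$, domination by the second iterate, which deposits mass $\gtrsim R^2T$ at the fixed bounded frequency $\xi\approx 1$ (where all weights $\langle\xi\rangle^\theta$ are comparable, giving the infinite loss of regularity), a tail bound $O(R^3T^2)$, and the parameter regime $T\sim 1/R$ with $R$ large and then $N$ large. The only slip is the claim that the non-resonant contributions to $\widehat{u_1(\tau)^2}$ on $I'$ are $R$-independent: the $u_0$--$v_0$ cross terms are in fact of size $O\bigl(R\|u_0\|_{\F L^1}\bigr)$ (cf.\ Lemma \ref{d0}\eqref{3d1}, which bounds them by $tR$), but since $RT\ll R^2T$ for $R\gg1$ this does not affect your parameter choices or conclusions.
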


In \cite{Bona1}  Bona and  Tzvetkov proved that \eqref{bbm} is globally well-posed in   $H^s(\mathbb R)$ for $s\geq 0$.  Moreover, they also proved that   \eqref{bbm} is ill-posed for $s<0$ in the sense that the solution map  $u_0\mapsto u(t)$ is not $C^2$ from  $H^s(\mathbb R)$ to  $C([0,T], H^s(\mathbb R))$.  Later in \cite{Panthee1}  Panthee proved that  this is discontinuous at the origin from $H^s(\mathbb R)$ to $\mathcal{D}'(\mathbb R)$.  Recently Bona and Dai in \cite{Dia1} established norm inflation for \eqref{bbm} at zero initial data in $\dot{H^s}(\mathbb T)$ for $s<0.$ We note that Theorem \ref{mt} also hold for the corresponding homogeneous $\dot{X}^{p,q}_s(\m)$ spaces,  see Remark \ref{hs}.
 The particular case of Theorem \ref{mt} strengthen  these results by establishing infinite loss of regularity at every initial data in $H^s(\m)$ for $s<0.$ In \cite[Theorem 1.7]{carlos},  Banquet and Villamizar-Roa  proved that  \eqref{bbm} is locally well-posed in $M_s^{2,1}(\mathbb R)$ for $s\geq 0.$  Thus particular case of Theorem \ref{mt} complement this result by establishing sharp strong ill-posedness in $M^{2,1}_s(\mathbb R)$ for $s<0.$
To the best of authors' knowledge there is no well-posedness result for \eqref{bbm} in Fourier amalgam $\widehat{w}^{p,q} _s \ (p\neq 2)$ (except in $\F L^1(\m)$, see Corollary \ref{uc})  and in $W^{p,q}$ (except in $H^s$) spaces.  

We use Fourier analytic approach to prove Theorem \ref{mt}.  This approach dates back to  Bejenaru and Tao  \cite{bejenaru2006sharp}  to obtain ill-posedness for quadratic NLS and further developed by Iwabuchi in \cite{iwabuchi2015ill}.  Later Kishimoto \cite{kishimoto2019remark} established  norm inflation (NI) for NLS on special domain{\footnote{special domain: $\mathbb R^{d_1}\times \mathbb T^{d_2}, d=d_1+d_2$ and  with nonlinarity:  $\sum_{j=1}^n \nu_ju^{\rho_j}(\bar{u})^{\sigma_j-\rho_j}$ where $\nu_j\in\C$, $\sigma_j \in \mathbb N$, $\rho_j\in\N\cup\{0\}$ with $\sigma_j\geq\max(\rho_j,2)$.} and  Oh  \cite{oh2017remark} established NI at general initial data for cubic NLS.     Recently this approach have been used to obtain  strong ill-posedness for NLW in \cite{Forlano1, bhimani2021norm}.  We refer to \cite[Section 2]{kishimoto2019remark} for detail discussion on this approach.

We now briefly comment and outline the  proof of Theorem \ref{mt}.  We first justify the convergence of series of Picard terms,  the smooth solutions to \eqref{bbm}, in Wiener algebra 
$\mathcal{F}L^1$ (see Corollary \ref{uc}).   This is possible  since linear BBM propagator is unitary on $\mathcal{F}L^1$ and the bilinear operator for the nonlinearity in \eqref{ebbm} is  bounded (see Lemma \ref{est0}).  Then  \eqref{bbm} experience  NI at  general initial data  because (with appropriately chosen initial data close to the given data) one Picard term dominates,  in $X^{p,q}_s-$norm,  rest of the Picard iterate terms in the series   for $s<0$ 
 and also this term  becomes arbitrarily large (see \eqref{rc1}, \eqref{rc2}, \eqref{rc3}).
To this end,    
we  perturb general initial data $u_0$ by $\phi_{0,N}.$  Here,  $\phi_{0, N}$ is defined  on Fourier side by scalar (depends on $N$)
multiplication of characteristic function on  union of two  intervals  obtained by translation of $[-1, 1]$ by $\pm N$ and so the size of support of $\phi_{0, N}$ remain uniform. 
Specifically,  we set 
\[\F{\phi_{0,N}}=R\chi_{I_N}\] where $  I_N=[-N-1,-N+1]\cup[N-1,N+1] $ with $N\gg1,R=R(N)\gg1$ (to be chosen later) and 
\[u_{0,N}=u_0+\phi_{0,N}.\]
Eventually this  $u_{0, N}$  will play a  role of $u_{0,\epsilon}$ in Theorem \ref{mt}. Similar  $\phi_{0,N}$ was used by Bona and Tzvetkov to establish the solution map fails to become $C^2$ in \cite{Bona1} and also by Panthee 
\cite{Panthee1} to conclude  that in fact the solution map is discontinuous.
In \cite{Bona1} the size of the support of $\phi_{0,N}$ in the Fourier side was allowed to vary as $N\to\infty$ with a normalizing constant to make sure $\n{\phi_{0,N}}_{H^s}\sim1$ where as in \cite{Panthee1}, $\F\phi_{0,N}$ is taken as $\chi_{I_N}$ which implies $\n{\phi_{0,N}}_{H^s}\to0$ as $N\to\infty$.
To establish NI with infinite loss of regularity,  we multiply $R=R(N)\gg1$ with the Panthee's choice of $\phi_{0,N}$ to make sure the second Picard iterate $U_2(t)[u_{0,N}]$  has desired property (as mentioned above) and  reduce analysis  on considering single term on $\ell^q-$norm:
$$\|\<n\>^\theta f(n)\|_{\ell_n^q(n=1)}=2^{(s-\theta)/2}\|\<n\>^s f(n)\|_{\ell_n^q(n=1)}\quad  \text{for all} \ \theta \in \mathbb R.$$
as done in NLW case in  \cite{bhimani2021norm}.  We note that finite loss of regularity of NLW was initiated by Lebeau in  \cite{Lebeau05} and infinite loss of regularity for NLS, via geometric optics approach,  by Carles et al.  in \cite{carles2012geometric}.  

 The rest of the paper is organized as follows: In Section \ref{S2}, we recall the definitions of the time-frequency spaces.   In Section \ref{S3}  we establish power series expansion of the solution in $\F L^1$,  by  establishing $\widehat{w}_s^{p,q}$-estimates of the Picard terms for general data. In Section \ref{S4}, we first prove various estimates for of the Picard terms with particular choices of data and this enables us to conclude the proof of Theorem \ref{mt}.

\section{Function Spaces}\label{S2}
 Let $\mathcal{F}$ denotes the Fourier transform and  $\langle \cdot \rangle^s= (1+|\cdot|^2)^{s/2}, s\in \mathbb R.$ Here $\widehat{\m}$ denotes the Pontryagin dual of $\m,$ i.e.,  $\widehat{\m}= \R$ if $\m=\R$ and $\widehat{\m}=\mathbb Z$ if $\m=\trs$. 
 $\mathcal{S}'(\m)$ denotes the space of tempered distributions,  see e.g. \cite[Part II]{RuzhanskyB}  for details. The \textbf{Fourier-Lebesgue spaces $\mathcal{F}L^q_s(\m) \ (1\leq q \leq \infty,  s\in \mathbb R)$} is defined by 
$$\mathcal{F}L^q_s(\m)=\left\{f\in \mathcal{S}'(\m): \mathcal{F}f \  \langle  \cdot  \rangle^s  \in L^q (\widehat{\m})\right \}.$$
In 1980's,  Feichtinger \cite{feichtinger1983modulation} have introduced  the \textbf{modulation spaces  $M^{p,q}_s(\m)$} and \textbf{Wiener amalgam spaces  $W^{p,q}_s(\m)$}  using shrot-time Fourier transform (STFT)\footnote{STFT is also known as windowed Fourier transform and closely related to Fourier-Wigner and  Bargmann transform.  See e.g.  \cite[Lemma 3.1.1]{grochenig2013foundations} and \cite[Proposition 3.4.1]{grochenig2013foundations}. }.  The  STFT  of a  $f\in \mathcal{S}'(\m)$ with
 respect to a window function $0\neq g \in {\mathcal S}(\m)$ is defined by 
\begin{equation*}\label{stft}
V_{g}f(x,y)= \int_{\m} f(t) \overline{T_xg(t)} e^{- 2\pi i y\cdot t}dt,  \  (x, y) \in \m \times \widehat{\m}
\end{equation*}
 whenever the integral exists.  Here, $T_xg(t)=g(tx^{-1})$ is the translation operator on $\m$.  We define modulation $M^{p,q}_s(\m)$ and  and Wiener amalgam spaces $W^{p,q}_s(\m)$, for $1\leq p, q \leq \infty, s\in \mathbb R$ by the norm:  
\[ \|f\|_{M^{p,q}_s}=  \left\| \|V_gf(x,y)\|_{L^p(\m)} \langle y \rangle^s \right\|_{L^q(\widehat{\m})} \quad \text{and} \quad  \|f\|_{W^{p,q}_s(\m)}= \left\| \|V_gf(x,y) \langle y \rangle^s\|_{L^q(\widehat{\m})}  \right\|_{L^p(\m)}. \] 
 The definition of the modulation space  is independent of the choice of 
the particular window function, see  \cite[Proposition 11.3.2(c)]{grochenig2013foundations}.  There is also equivalent characterization of these space via frequency uniform decomposition  (which is quite similar to Besov spaces- where decomposition is dyadic).
To do this,
let   $\rho \in \mathcal{S}(\R)$,  $\rho: \R \to [0,1]$  be  a smooth function satisfying   $\rho(\xi)= 1 \  \text{if} \ \ |\xi|\leq \frac{1}{2} $ and $\rho(\xi)=
0 \  \text{if} \ \ |\xi|\geq  1.$ 
Set $ \rho_n(\xi)= \rho(\xi -n)$
and
$\sigma_{n}(\xi)=
  \frac{\rho_{n}(\xi)}{\sum_{\ell\in\Z^{d}}\rho_{\ell}(\xi)},  n
  \in \Z.$
Then define the frequency-uniform decomposition operators by 
\[\oldsquare_n = \mathcal{F}^{-1} \sigma_n \mathcal{F}. \]
It is known \cite[Proposition 2.1]{wang2007global}, \cite{feichtinger1983modulation} that
\begin{equation*}
\|f\|_{M^{p,q}_s(\m)}\asymp   \left\| \left\lVert
  \oldsquare_nf\right\rVert_{L_x^p(\m)} \langle n \rangle ^{s} \right\|_{\ell^q_n(\Z)}  \quad \text{and} \quad \|f\|_{W^{p,q}_s(\m)}\asymp   \left\| \left\lVert
  \oldsquare_nf \cdot \langle n \rangle ^{s}  \right\|_{\ell^q_n(\Z)}  \right\rVert_{L_x^p(\m)} .
\end{equation*}
Recently in \cite{JH},   Oh and Forlano have introduced \textbf{Fourier amalgam spaces }$\widehat{w}^{p,q}_s(\m) \ (1\leq p, q \leq \infty,  s\in \mathbb R):$
$$\widehat{w}^{p,q}_s(\m)=\left \{ f\in \mathcal{S}'(\m): \|f\|_{\widehat{w}^{p,q}_s}=   \left\| \left\lVert \chi_{n+Q_1}(\xi)\F f(\xi)\right\rVert_{L_\xi^p(\mh)} \langle n \rangle^s \right\|_{\ell^q_n(\z)}< \infty \right\},
$$
where $Q_1=(-\frac{1}{2},\frac{1}{2}].$
The  \textbf{homogeneous spaces $\dot{X}^{p,q}_s(\m)$} corresponding to above spaces can be defined by replacing Japanese bracket $\langle \cdot \rangle^{s}$ by $|\cdot|^s$ in their definitions.

\section{Local well-posedness in  Wiener algebra $\mathcal{F}L^1$}\label{S3}
The integral version of \eqref{ebbm} is given by
\begin{equation}\label{ibbm}
u(t)=U(t)u_0-\frac{i}{2}\int_0^tU(t-\tau)\varphi(D_x)u^2(\tau)d\tau
\end{equation}
where $\mathcal{F}U(t) \varphi(D_x) u(\xi)= e^{it \varphi (\xi)} \varphi (\xi) \mathcal{F}u (\xi)$ and   $U(t)u_0(x)= \mathcal{F}^{-1} (e^{it\varphi(\xi)}  \mathcal{F}u_0(\xi))(x)$ is the unique solution to the  linear problem
\[
iu_t= \varphi(D_x)u,  \ \ u(x,0)=u_0(x); \quad   (x, t)\in \m \times \mathbb R.
\]
Let us define the operator $\mathcal{N}$ given by
\[
\mathcal{N}(u,v)(t)=\int_0^tU(t-\tau)\varphi(D_x)(uv)(\tau)d\tau.
\]
\begin{definition}[Picard iteration]\label{imd} 
For $u_0\in L^2(\R^d),$  define
$U_1[u_0](t)= S(t)u_0$ and for $k\geq2$
\[U_k[u_0](t)=-\frac{i}{2}\sum_{{k_1, k_2 \geq 1}\atop{k_1+k_2 =k}}\mathcal{N}\left(  S_{k_1}[u_0],S_{k_2}[u_0]\right)(t). \]
\end{definition}
\begin{lemma}\label{est0}
Let $1\leq p, q \leq \infty,  s, t \in \mathbb R.$ Then  we have
\begin{enumerate}
\item $\n{U(t)u_0}_{\widehat{w}^{p,q}_s}=\n{u_0}_{\widehat{w}^{p,q}_s}$
\item $\n{\mathcal{N}(u,v)(t)}_{\widehat{w}^{p,q}_s}\leq\int_0^t\n{u(\tau)}_{\F L^1}\n{v(\tau)}_{\widehat{w}^{p,q}_s}d\tau\leq t\n{u}_{L\infty((0,t),\F L^1)}\n{v}_{L^\infty((0,t),\widehat{w}^{p,q}_s)}.$
\end{enumerate}
\end{lemma}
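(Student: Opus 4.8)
The plan is to prove the two assertions separately. \emph{Assertion (1)} records only the unimodularity of the free symbol: since $\mathcal{F}[U(t)u_0](\xi)=e^{it\varphi(\xi)}\widehat{u_0}(\xi)$ and $\varphi$ is real-valued, we have $|e^{it\varphi(\xi)}|=1$, hence $|\mathcal{F}[U(t)u_0](\xi)|=|\widehat{u_0}(\xi)|$ for a.e.\ $\xi$. Because the $\widehat{w}^{p,q}_s$-norm is assembled solely from the quantities $\|\chi_{n+Q_1}\widehat{f}\|_{L^p}$, and these depend on $\widehat f$ only through $|\widehat f|$, the equality $\|U(t)u_0\|_{\widehat{w}^{p,q}_s}=\|u_0\|_{\widehat{w}^{p,q}_s}$ is immediate. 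This observation will be reused inside the proof of (2).

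For \emph{Assertion (2)}, the second inequality is trivial: bound $\|u(\tau)\|_{\F L^1}\le\|u\|_{L^\infty((0,t),\F L^1)}$ and $\|v(\tau)\|_{\widehat{w}^{p,q}_s}\le\|v\|_{L^\infty((0,t),\widehat{w}^{p,q}_s)}$, then integrate $\int_0^t d\tau=t$ (if $t<0$ one reads $\int_t^0$ throughout). For the first inequality I would start from $\mathcal{N}(u,v)(t)=\int_0^t U(t-\tau)\,\varphi(D_x)[(uv)(\tau)]\,d\tau$, push the $\widehat{w}^{p,q}_s$-norm inside the $\tau$-integral by Minkowski's integral inequality, and on each slice apply (1) with $t-\tau$ in place of $t$ to discard the unitary propagator $U(t-\tau)$. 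This reduces everything to the fixed-time bound $\|\varphi(D_x)[(uv)(\tau)]\|_{\widehat{w}^{p,q}_s}\le\|u(\tau)\|_{\F L^1}\|v(\tau)\|_{\widehat{w}^{p,q}_s}$, which I would split into: (i) boundedness of $\varphi(D_x)$ on $\widehat{w}^{p,q}_s$ with operator norm $\le 1$, a consequence of $\|\varphi\|_{L^\infty}=\sup_\xi |\xi|/(1+\xi^2)=1/2\le1$, which yields $|\mathcal{F}[\varphi(D_x)h](\xi)|\le|\widehat h(\xi)|$ pointwise; and (ii) the product estimate $\|hg\|_{\widehat{w}^{p,q}_s}\le\|h\|_{\F L^1}\|g\|_{\widehat{w}^{p,q}_s}$.

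Step (ii) is the core of the matter. Writing $\widehat{hg}=\widehat h*\widehat g$ and decomposing $\widehat h=\sum_m\chi_{m+Q_1}\widehat h$, $\widehat g=\sum_k\chi_{k+Q_1}\widehat g$, one uses that $(\chi_{m+Q_1}\widehat h)*(\chi_{k+Q_1}\widehat g)$ is supported in a fixed bounded neighbourhood of $m+k$ and hence meets the cell $n+Q_1$ only for $O(1)$ indices $n$ with $n=m+k+O(1)$; on each such cell Young's inequality in the $\xi$-variable gives $\|(\chi_{m+Q_1}\widehat h)*(\chi_{k+Q_1}\widehat g)\|_{L^p}\le\|\chi_{m+Q_1}\widehat h\|_{L^1}\|\chi_{k+Q_1}\widehat g\|_{L^p}$, and taking the $\ell^q_n$-norm and invoking Young's inequality for sequences ($\ell^1*\ell^q\hookrightarrow\ell^q$) closes the bound; the weight $\langle n\rangle^s$, with $\langle n\rangle\approx\langle m+k\rangle$ on the relevant support, is carried through by Peetre's inequality, the case $s=0$ being exactly the Banach-algebra property of $\mathcal{F}L^1$. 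I expect step (ii) — the interaction of the weight $\langle n\rangle^s$ with the convolution, i.e.\ the module property of $\widehat{w}^{p,q}_s$ over $\mathcal{F}L^1$ — to be the only point demanding genuine care; assertion (1), Minkowski's inequality, the sup-bound on $\varphi$, and the two Young inequalities are all routine.
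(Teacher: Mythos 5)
Your treatment of assertion (1), of the second inequality in (2), and your reduction of the first inequality in (2) via Minkowski's inequality, the unitarity of $U(t-\tau)$ and $\|\varphi\|_{L^\infty}\le 1$ all coincide with the paper's argument. The divergence is in your step (ii), and you have correctly located the sensitive point --- but it cannot be closed as you propose. Peetre's inequality gives $\langle m+k\rangle^{s}\le 2^{|s|}\langle m\rangle^{|s|}\langle k\rangle^{s}$, so after your cell decomposition the factor $\langle m\rangle^{|s|}$ attaches to the cells of $\widehat h$, and what the argument actually yields is $\|hg\|_{\widehat{w}^{p,q}_s}\lesssim \|h\|_{\mathcal{F}L^1_{|s|}}\|g\|_{\widehat{w}^{p,q}_s}$, i.e.\ a module property over the \emph{weighted} Fourier--Lebesgue space, not over $\mathcal{F}L^1$. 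This loss is not an artifact of your method: for $s\neq0$ the unweighted product estimate is false. On $\T$ take $h(x)=e^{-iNx}$, $g(x)=e^{iNx}$; then $hg\equiv1$, so $\|hg\|_{\widehat{w}^{p,q}_s}=1$ while $\|h\|_{\F L^1}\|g\|_{\widehat{w}^{p,q}_s}=\langle N\rangle^{s}\to0$ when $s<0$. The same frequency cancellation defeats inequality (2) itself: with $u=e^{-i(N-1)x}$, $v=e^{iNx}$ (constant in time) one has $\F[uv]$ concentrated at $\xi=1$ where $|\varphi|=\tfrac12$, so $\|\mathcal{N}(u,v)(t)\|_{\widehat{w}^{p,q}_s}\gtrsim_s t$ for $0<t\le1$, whereas the claimed bound is $t\langle N\rangle^{s}\to0$. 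Hence no proof of the statement as literally written (unweighted $\F L^1$ norm, $s<0$) can exist; your plan proves it only for $s=0$, and then only up to a harmless absolute constant rather than with constant $1$.

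For comparison, the paper's own proof glosses over exactly the same point: it passes cell by cell from $\|\chi_{n+Q_1}(\F u\ast\F v)\|_{L^p}$ to $\|\F u\|_{L^1}\|\chi_{n+Q_1}\F v\|_{L^p}$, which ignores that convolution transports mass between cells and is false as stated; the weight loss above is the quantitative manifestation of this after summing in $n$. So your proposal reproduces the paper's skeleton, and the step you flagged as ``the only point demanding genuine care'' is precisely where both arguments break down. What survives of your scheme: for $s=0$ the translation argument $\|\chi_{n+Q_1}(\widehat h\ast\widehat g)\|_{L^p}\le\int|\widehat h(\eta)|\,\|\chi_{n-\eta+Q_1}\widehat g\|_{L^p}\,d\eta$, together with the fact that a shifted cell meets at most two standard cells, gives the estimate up to an absolute constant, which is all that the $\F L^1$ theory (Corollary \ref{uc}) requires; for $s\neq0$ the correct version of the lemma carries $\|u(\tau)\|_{\mathcal{F}L^1_{|s|}}$ in place of $\|u(\tau)\|_{\F L^1}$, and the paper's later use of the lemma for $s<0$ would have to be reexamined with that weighted norm or with arguments exploiting the specific frequency supports of the perturbation $\phi_{0,N}$.
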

\begin{proof}
Note that
\begin{align*}
\n{U(t)u_0}_{\widehat{w}^{p,q}_s}=\left\| \left\lVert \chi_{n+Q_1}(\xi) e^{it\varphi(\xi)}\F u_0 (\xi)\right\rVert_{L_\xi^p(\mh)} (1+|n|^2)^{s/2} \right\|_{\ell^q_n(\z)}=\n{u_0}_{\widehat{w}^{p,q}_s}.
\end{align*}
Using the fact $\abs{\varphi}\leq1$   we have
\begin{align*}
\n{\mathcal{N}(u,v)(t)}_{\widehat{w}^{p,q}_s}
&=\left\| \left\lVert \chi_{n+Q_1}(\xi)\int_0^te^{i(t-\tau)\varphi(\xi)}\varphi(\xi)(\F u\ast\F v)(\xi)(\tau)d\tau\right\rVert_{L_\xi^p(\mh)} \<n\>^s \right\|_{\ell^q_n(\z)}\\
&\leq\left\|\int_0^t \left\lVert \chi_{n+Q_1}(\xi)e^{i(t-\tau)\varphi(\xi)}\varphi(\xi)(\F u\ast\F v)(\xi)(\tau)\right\rVert_{L_\xi^p(\mh)}d\tau \<n\>^s \right\|_{\ell^q_n(\z)}\\
&\leq\left\|\int_0^t \left\lVert \chi_{n+Q_1}(\xi)(\F u\ast\F v)(\xi)(\tau)\right\rVert_{L_\xi^p(\mh)}d\tau \<n\>^s \right\|_{\ell^q_n(\z)}\\
&\leq\int_0^t\left\| \n{\F u(\xi)(\tau)}_{L_\xi^1(\mh)}\left\lVert \chi_{n+Q_1}(\xi)\F v(\xi)(\tau)\right\rVert_{L_\xi^p(\mh)} \<n\>^s \right\|_{\ell^q_n(\z)}d\tau\\
&= \int_0^t\n{u(\tau)}_{\F L^1}\n{v(\tau)}_{\widehat{w}^{p,q}_s}d\tau.
\end{align*}
%
\end{proof}

\begin{lemma}[See \cite{kishimoto2019remark}] \label{DS}
  \label{it0}  Let $\{b_k\}_{k=1}^{\infty}$ be a sequence of nonnegative  real numbers such that 
\[ b_k \leq C  \sum_{{k_1, k_2 \geq
      1}\atop{k_1+ k_2 =k}}  b_{k_1} b_{k_2} \quad \forall\ k \geq 2.\]
Then  we have 
$b_k \leq b_1 C_0^{k-1}$, for all $k \geq 1$, where $C_0=
  \frac{2\pi^2}{3} C  b_1$.  
\end{lemma}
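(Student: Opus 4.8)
The plan is to prove, by strong induction on $k$, the slightly stronger bound
\[ b_k \le \frac{b_1\, C_0^{k-1}}{k^2}\qquad (k\ge 1), \]
which immediately implies the assertion since $k^{-2}\le 1$. The base case $k=1$ is the tautology $b_1\le b_1$. For the inductive step, assume the bound holds for all $1\le j<k$; since every pair with $k_1+k_2=k$, $k_1,k_2\ge 1$, has $k_1,k_2<k$, the hypothesis on the sequence combined with the induction hypothesis gives
\[ b_k \le C\sum_{\substack{k_1,k_2\ge 1\\ k_1+k_2=k}} b_{k_1}b_{k_2}
\le C\,b_1^2\,C_0^{k-2}\sum_{\substack{k_1,k_2\ge 1\\ k_1+k_2=k}}\frac{1}{k_1^2k_2^2}. \]
Thus everything reduces to the elementary estimate
\[ \sum_{\substack{k_1,k_2\ge 1\\ k_1+k_2=k}}\frac{1}{k_1^2k_2^2}\ \le\ \frac{2\pi^2}{3\,k^2}, \]
for then the choice $C_0=\frac{2\pi^2}{3}Cb_1$ turns the right-hand side of the previous display into exactly $b_1 C_0^{k-1}/k^2$, closing the induction; discarding the harmless factor $k^{-2}$ yields $b_k\le b_1 C_0^{k-1}$ for all $k\ge 1$.

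The only real content is the combinatorial inequality, which I would prove by partial fractions. Using $\tfrac{1}{k_1k_2}=\tfrac{1}{k}\bigl(\tfrac{1}{k_1}+\tfrac{1}{k_2}\bigr)$ when $k_1+k_2=k$ — first to recast the square and then once more on the resulting cross term — one obtains
\[ \sum_{\substack{k_1,k_2\ge 1\\ k_1+k_2=k}}\frac{1}{k_1^2k_2^2}
= \frac{1}{k^2}\sum_{\substack{k_1,k_2\ge 1\\ k_1+k_2=k}}\Bigl(\frac{1}{k_1}+\frac{1}{k_2}\Bigr)^{2}
= \frac{1}{k^2}\Bigl(\,2\sum_{n=1}^{k-1}\frac{1}{n^2}+\frac{4H_{k-1}}{k}\,\Bigr), \]
where $H_m=\sum_{n=1}^{m}\tfrac1n$. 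Here $2\sum_{n=1}^{k-1}n^{-2}<\frac{\pi^2}{3}$ by the Basel identity, while the numerator of $\frac{H_k}{k+1}-\frac{H_{k-1}}{k}$ equals $kH_k-(k+1)H_{k-1}=1-H_{k-1}\le 0$ for $k\ge 2$, so $k\mapsto H_{k-1}/k$ is non-increasing on $\{k\ge 2\}$ and hence $\le\frac12$ (its value at $k=2$), giving $4H_{k-1}/k\le 2<\frac{\pi^2}{3}$. Adding the two contributions produces the claimed bound $\frac{2\pi^2}{3k^2}$.

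I expect the main — indeed essentially the only — point requiring care to be keeping the constant sharp: a cruder argument, say splitting the sum at $k_1=k/2$ and bounding the larger factor by $4/k^2$, costs an extra factor $2$ and produces $C_0=\frac{4\pi^2}{3}Cb_1$ rather than the stated value, so the partial-fractions identity (which loses nothing) is what makes the bookkeeping close with exactly $C_0=\frac{2\pi^2}{3}Cb_1$. With the combinatorial estimate established, the induction concludes with no further work.
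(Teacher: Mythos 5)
Your proof is correct, and it is essentially the standard argument behind this lemma: the paper itself gives no proof (it defers to Kishimoto), and the cited proof is exactly this strong induction on the strengthened bound $b_k\le b_1C_0^{k-1}k^{-2}$ combined with the convolution estimate $\sum_{k_1+k_2=k}k_1^{-2}k_2^{-2}\le \tfrac{2\pi^2}{3}k^{-2}$. The only cosmetic difference is in the last step: the usual route gets the constant from $\bigl(\tfrac{1}{k_1}+\tfrac{1}{k_2}\bigr)^2\le 2\bigl(\tfrac{1}{k_1^2}+\tfrac{1}{k_2^2}\bigr)$, giving $\tfrac{1}{k^2}\cdot 4\sum_{n\ge1}n^{-2}=\tfrac{2\pi^2}{3k^2}$ directly, whereas you expand the square exactly and control the cross term $4H_{k-1}/k\le 2$ by monotonicity, which is equally valid (and slightly sharper).
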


%

\begin{lemma}\label{est}
 There exists $c>0$ such that for all $t>0$ and $k\geq2$, we have 
\begin{align*}
\n{U_k[u_0](t)}_{\widehat{w}_s^{p,q}}\leq (ct)^{{k-1}}\n{u_0}_{\F L^1}^{k-1}\n{u_0}_{\widehat{w}_s^{p,q}}.
\end{align*}
\end{lemma}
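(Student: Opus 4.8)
The plan is to prove the bound on the Picard iterates $U_k[u_0](t)$ by induction on $k$, using Lemma \ref{est0} to control the bilinear operator $\mathcal{N}$ and then invoking the combinatorial Lemma \ref{DS} (with $C_0 = \frac{2\pi^2}{3} C b_1$) to collapse the recursive estimate into a clean geometric bound. Set $b_k = b_k(t) = \sup_{0 < \tau \le t} \n{U_k[u_0](\tau)}_{\widehat{w}^{p,q}_s}$; the goal is to show $b_k$ satisfies the hypotheses of Lemma \ref{DS}.

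First I would record from the definition of the Picard iteration that for $k \ge 2$,
\[
\n{U_k[u_0](t)}_{\widehat{w}^{p,q}_s} \le \frac{1}{2} \sum_{{k_1,k_2\ge1}\atop{k_1+k_2=k}} \n{\mathcal{N}(U_{k_1}[u_0], U_{k_2}[u_0])(t)}_{\widehat{w}^{p,q}_s},
\]
and then apply part (2) of Lemma \ref{est0} to get
\[
\n{\mathcal{N}(U_{k_1}[u_0], U_{k_2}[u_0])(t)}_{\widehat{w}^{p,q}_s} \le t \, \n{U_{k_1}[u_0]}_{L^\infty((0,t),\F L^1)} \, \n{U_{k_2}[u_0]}_{L^\infty((0,t),\widehat{w}^{p,q}_s)}.
\]
The subtlety here is that the recursion mixes two norms: $\F L^1$ and $\widehat{w}^{p,q}_s$. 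However, $\F L^1 = \widehat{w}^{1,1}_0$ is itself one of the Fourier amalgam spaces, so Lemma \ref{est0} applies verbatim with $(p,q,s)$ replaced by $(1,1,0)$. This means the quantities $a_k(t) := \sup_{0<\tau\le t}\n{U_k[u_0](\tau)}_{\F L^1}$ satisfy the \emph{same} recursive inequality $a_k \le \frac{t}{2}\sum_{k_1+k_2=k} a_{k_1} a_{k_2}$, so by Lemma \ref{DS} we obtain $a_k(t) \le a_1(t) (c_1 t \n{u_0}_{\F L^1})^{k-1}$ where $a_1(t) = \n{U(t)u_0}_{\F L^1} = \n{u_0}_{\F L^1}$ by part (1) of Lemma \ref{est0}, and $c_1 = \pi^2/3$.

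Then I would feed this back into the mixed recursion. Writing $b_k(t) = \sup_{0<\tau\le t}\n{U_k[u_0](\tau)}_{\widehat{w}^{p,q}_s}$ and using the bound on $a_{k_1}$ just established together with $b_{k_2}$ on the other factor, the recursion becomes
\[
b_k(t) \le \frac{t}{2} \sum_{{k_1+k_2=k}} a_{k_1}(t)\, b_{k_2}(t) \le \frac{t}{2} \sum_{{k_1+k_2=k}} \n{u_0}_{\F L^1} (c_1 t \n{u_0}_{\F L^1})^{k_1-1} b_{k_2}(t).
\]
A clean way to finish is to guess $b_k(t) \le (ct \n{u_0}_{\F L^1})^{k-1} \n{u_0}_{\widehat{w}^{p,q}_s}$ and verify it by strong induction: the base case $k=1$ is part (1) of Lemma \ref{est0} (with equality), and the inductive step amounts to checking $\frac{1}{2}\sum_{k_1+k_2=k} c_1^{k_1-1} c^{k_2-1} \le c^{k-1}$, i.e. $\sum_{j=0}^{k-2} (c_1/c)^{j} \le 2 c$ after reindexing, which holds for $c$ large enough relative to $c_1$ (e.g. $c \ge 2c_1 + 1$ makes the geometric sum at most $2$, hence bounded by $2c$). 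Alternatively, one applies Lemma \ref{DS} directly to the sequence $\tilde b_k := b_k / \n{u_0}_{\widehat w^{p,q}_s}$ after the substitution absorbing the $\F L^1$ weights — but the direct induction is cleanest since the recursion is not of the pure quadratic form required by Lemma \ref{DS} once the two norms are decoupled.

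The main obstacle is the two-norm structure of the recursion: Lemma \ref{DS} in its stated form applies to a sequence satisfying a homogeneous quadratic inequality $b_k \le C\sum b_{k_1}b_{k_2}$ in \emph{one} scalar sequence, whereas here one factor is measured in $\F L^1$ and the other in $\widehat{w}^{p,q}_s$. The resolution — treating the $\F L^1$ iterates first via Lemma \ref{DS} applied to $\F L^1 = \widehat{w}^{1,1}_0$, then bootstrapping — is the only real idea needed; everything else is the elementary induction sketched above, and the constant $c$ can be taken as any fixed multiple of $\pi^2$ (independent of $t$, $u_0$, $p$, $q$, $s$), which is exactly what the statement asserts.
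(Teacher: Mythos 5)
Your argument is correct: establishing $\n{U_k[u_0](t)}_{\F L^1}\le \n{u_0}_{\F L^1}\,(c_1 t\n{u_0}_{\F L^1})^{k-1}$ first, by applying Lemma \ref{DS} to the sup-in-time $\F L^1$ norms (legitimate since $\F L^1=\widehat{w}^{1,1}_0$ and Lemma \ref{est0} applies with $(p,q,s)=(1,1,0)$), and then closing the mixed recursion by strong induction via the geometric-sum check $\tfrac12\sum_{k_1+k_2=k}c_1^{k_1-1}c^{k_2-1}\le c^{k-1}$ for $c$ a fixed multiple of $c_1$, yields exactly the stated bound with $c$ independent of $t,u_0,p,q,s$. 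The paper uses the same two ingredients but organizes them as a single induction: it proves the claim $\n{U_k[u_0](t)}_{\widehat{w}_s^{p,q}}\le b_k\, t^{k-1}\n{u_0}_{\F L^1}^{k-1}\n{u_0}_{\widehat{w}_s^{p,q}}$ with $b_1=1$ and $b_k=\frac{1}{k-1}\sum_{k_1+k_2=k}b_{k_1}b_{k_2}$ (the $\frac{1}{k-1}$ coming from $\int_0^t\tau^{k-2}\,d\tau$), and since the claim is uniform in $(p,q,s)$ the $\F L^1$ factor in the bilinear estimate is controlled by the induction hypothesis itself (its $(1,1,0)$ instance), with Lemma \ref{DS} invoked once at the end to get $b_k\le c_0^{k-1}$. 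So your resolution of the two-norm obstacle — decoupling the $\F L^1$ iterates and bootstrapping — differs from the paper's (which keeps one induction covering both norms), but it costs nothing here, since only geometric rather than factorial control of the constants is needed, and both routes give the same conclusion.
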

\begin{proof}[{\bf Proof}] 
Let $\{b_k\}$ be a  sequence of nonnegative  real numbers such that 
\[ b_1=1\quad \text{and} \quad  b_k = \frac{1}{k-1}  \sum_{{k_1,k_2 \geq
      1}\atop{k_1+k_2 =k}}  b_{k_1}b_{k_2}
  \quad \forall\ k \geq 2.\]
  By Lemma \ref{it0}, we have $b_k\leq c_0^{k-1}$ for some $c_0>0$. In view of this, it is enough to prove the following claim: 
\begin{align*}
\n{U_k[u_0](t)}_{\widehat{w}_s^{p,q}}\leq b_kt^{{k-1}}\n{u_0}_{\F L^1}^{k-1}\n{u_0}_{\widehat{w}_s^{p,q}}. 
\end{align*}
By Definition \ref{imd},  Lemma \ref{est0} and using the fact $\frac{\abs{\xi}}{1+\xi^2}\leq1$, we have 
\begin{align}\label{est01}
\n{ U_k[u_0](t)}_{\widehat{w}_s^{p,q} }
\leq  \sum_{{k_1, k_2 \geq 1}\atop{k_1+k_2 =k}}\int_0^t\n{ U_{k_1}[u_0](\tau)}_{\F L^1}\n{ U_{k_2}[u_0](\tau)}_{\widehat{w}^{p,q}_s} d\tau
\end{align}
Thus, we have 
 \begin{align*}
 \n{ U_2[u_0](t)}_{\widehat{w}_s^{p,q} }\leq t\n{ U[u_0]}_{L^\infty((0,t),\F L^1)}\n{ U[u_0]}_{L^\infty((0,t),\widehat{w}^{p,q}_s)}=t\n{ u_0}_{\F L^1}\n{ u_0}_{\widehat{w}^{p,q}_s}
\end{align*}
 Hence,  the claim is true for $k=2$ as $b_2=1$. 
Assume that the result is true up to the label $(k-1)$. Then from \eqref{est01}, we obtain
\begin{align*}
\n{ U_k[u_0](t)}_{\widehat{w}_s^{p,q}}
&\leq \sum_{{k_1, k_2 \geq 1}\atop{k_1+k_2 =k}}b_{k_1}b_{k_2}\int_0^t \tau^{{k_1-1}}\n{u_0}_{\F L^1}^{k_1}\times\tau^{{k_2-1}}\n{u_0}_{\F L^1}^{k_2-1}\n{u_0}_{\widehat{w}_s^{p,q}}d\tau\\
&=b_kt^{{k-1}}\n{u_0}_{\F L^1}^{k-1}\n{u_0}_{\widehat{w}_s^{p,q}}.
\end{align*}
Thus the claim is true at the level $k$. This completes the proof.
\end{proof}

\begin{corollary}\label{uc}
If $0<T\ll  M^{-1},$ then  for any  $u_0 \in \F L^1$ with  $\|u_0\|_{\F L^1} \leq M$, there exists a unique solution $u\in C([0, T], \F L^1(\m))$ to the integral equation \eqref{ibbm} associated with \eqref{ebbm}, given by 
 \begin{eqnarray}\label{pse}
 u= \sum_{k=1}^{\infty} U_k[u_0]
 \end{eqnarray}
 which converges absolutely in $C([0, T], \F L^1(\m)).$
\end{corollary}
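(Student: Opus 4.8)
The proof is the standard Picard/power-series argument carried out in the Banach algebra $\F L^1(\m)$: recall that $\F L^1$ is an algebra under pointwise multiplication ($\n{fg}_{\F L^1}=\n{\widehat f\ast\widehat g}_{L^1}\le\n{f}_{\F L^1}\n{g}_{\F L^1}$), that $U(t)$ is an isometry of $\F L^1$ which is strongly continuous in $t$ (by dominated convergence, using $\abs{\varphi}\le1$ and $\widehat{u_0}\in L^1(\mh)$), and that $\varphi(D_x)$ is bounded on $\F L^1$; in particular the Picard iterates $U_k[u_0]$ of Definition~\ref{imd} are well defined in $\F L^1$. The ingredients are Lemmas~\ref{est0} and~\ref{est}, both used with $p=q=1$, $s=0$, so that $\widehat{w}_0^{1,1}=\F L^1$.

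First I would establish absolute convergence of \eqref{pse}. Lemma~\ref{est} gives, for every $t\in[0,T]$ and $k\ge1$,
\[ \n{U_k[u_0](t)}_{\F L^1}\le (ct)^{k-1}\n{u_0}_{\F L^1}^{k}\le M\,(cTM)^{k-1}. \]
Choosing $T$ so small that $cTM\le\tfrac12$ — this is the quantitative meaning of $T\ll M^{-1}$ — yields $\sum_{k\ge1}\sup_{t\in[0,T]}\n{U_k[u_0](t)}_{\F L^1}\le 2M<\infty$. Moreover each $U_k[u_0]$ lies in $C([0,T],\F L^1)$: this holds for $U_1[u_0](t)=U(t)u_0$ by strong continuity of $U$, and propagates to all $k$ by induction, since $\mathcal N$ sends pairs in $C([0,T],\F L^1)$ to $C([0,T],\F L^1)$ — its integrand $U(t-\tau)\varphi(D_x)(fg)(\tau)$ is jointly continuous in $(t,\tau)$ and dominated by $\n{f(\tau)}_{\F L^1}\n{g(\tau)}_{\F L^1}$. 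Hence $u:=\sum_{k\ge1}U_k[u_0]$ converges absolutely in $C([0,T],\F L^1)$, with $\n{u}_{L^\infty([0,T],\F L^1)}\le 2M$.

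Next I would verify that $u$ solves \eqref{ibbm}, i.e. $u=U(\cdot)u_0-\tfrac i2\mathcal N(u,u)$. Since the series for $u$ converges absolutely in the algebra $\F L^1$ uniformly in $t$, the Cauchy product gives $u(\tau)^2=\sum_{k\ge2}\sum_{k_1+k_2=k}U_{k_1}[u_0](\tau)\,U_{k_2}[u_0](\tau)$, with convergence in $C([0,T],\F L^1)$; and the bounded linear operator $g\mapsto\int_0^tU(t-\tau)\varphi(D_x)g(\tau)\,d\tau$ may be taken inside this sum, so Definition~\ref{imd} yields
\[ -\tfrac i2\,\mathcal N(u,u)(t)=\sum_{k\ge2}\Big({-\tfrac i2}\sum_{k_1+k_2=k}\mathcal N\big(U_{k_1}[u_0],U_{k_2}[u_0]\big)(t)\Big)=\sum_{k\ge2}U_k[u_0](t)=u(t)-U(t)u_0, \]
which is exactly \eqref{ibbm}. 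For uniqueness, if $u,v\in C([0,T],\F L^1)$ both solve \eqref{ibbm} then $w=u-v$ satisfies $w=-\tfrac i2\big(\mathcal N(u,w)+\mathcal N(w,v)\big)$, so Lemma~\ref{est0}(2) gives $\n{w(t)}_{\F L^1}\le C\int_0^t\n{w(\tau)}_{\F L^1}\,d\tau$ with $C=\tfrac12\big(\n{u}_{L^\infty([0,T],\F L^1)}+\n{v}_{L^\infty([0,T],\F L^1)}\big)<\infty$, and Gronwall's inequality forces $w\equiv0$.

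The statement is a routine consequence of Lemmas~\ref{est0} and~\ref{est}; the only step needing a little care is the bookkeeping in the existence part — identifying $u^2$ with the double series of Picard products and commuting the Duhamel operator past the infinite sums — which is immediate from absolute convergence in the Banach algebra $\F L^1$, so I do not anticipate a genuine obstacle.
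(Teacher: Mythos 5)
Your proof is correct, but it takes a genuinely different route from the paper. The paper first runs a contraction-mapping argument for the map $\Psi(u)=U(t)u_0-\tfrac i2\mathcal N(u,u)$ on the ball $B_{2M}^T\subset C([0,T],\F L^1)$ to get existence and uniqueness of a fixed point, and only then identifies that fixed point with the series $\sum_k U_k[u_0]$ by an approximation argument: it shows the partial sums $u_j$ satisfy $u_j-\Psi(u_j)=-\sum_{k=j+1}^{2j}U_{k,j}[u_0]\to0$ (estimating the truncated Picard terms $U_{k,j}$ exactly as in Lemma \ref{est}) and uses continuity of $\Psi$ to conclude $u=\Psi(u)$ for the limit of the series. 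You instead verify directly that the sum of the series solves \eqref{ibbm}, by invoking the Banach-algebra structure of $\F L^1$ to write $u^2$ as the Cauchy product of the absolutely convergent series and commuting the Duhamel operator past the sum, and you get uniqueness from Gr\"onwall rather than from the contraction. Both arguments rest on the same quantitative inputs (Lemma \ref{est0} with $p=q=1$, $s=0$, and the geometric bound of Lemma \ref{est} under $cTM\ll1$); your route avoids the fixed-point machinery and the tail bookkeeping with $U_{k,j}$, and your Gr\"onwall step actually yields uniqueness in all of $C([0,T],\F L^1)$ rather than only within the ball $B_{2M}^T$, which is a (minor) strengthening of what the paper's contraction argument states explicitly; the paper's route, on the other hand, gives the solution first without ever needing the Cauchy-product resummation, and its identification argument generalizes to settings where the nonlinearity is handled only through the iteration estimates. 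One cosmetic remark: Lemma \ref{est} is stated for $k\ge2$, so your bound for $k=1$ should be cited as the isometry in Lemma \ref{est0}(1); this does not affect the argument.
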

\begin{proof}
Define 
\[\Psi(u)(t)=U(t)u_0-\frac{i}{2}\mathcal{N}(u,u)(t).\]
By Lemma \ref{est0},  we have 
\begin{align*}
\n{\Psi(u)}_{C([0,T],\F L^1)}&\leq \n{u_0}_{\F L^1}+T\n{u}_{C([0,T],\F L^1)}^2,\\
\n{\Psi(u)-\Psi(v)}_{C([0,T],\F L^1)}&\lesssim T\max\(\n{u}_{C([0,T],\F L^1)},\n{v}_{C([0,T],\F L^1)}\)\n{u-v}_{C([0,T],\F L^1)}.
\end{align*}
 Then considering the ball
\[B_{2M}^T=\set{\phi\in C([0,T],\F L^1):\n{\phi}_{C([0,T],\F L^1)}\leq 2M}\]
 with 
 $TM\ll1$ we find a fixed point of $\Psi$ in $B_{2M}^T$ and hence a solution to \eqref{ibbm}. This completes the proof of the first part of the lemma.

For the second part we note that in view of Lemma \ref{est}, the series \eqref{pse} converges absolutely if $0<T\ll M^{-1}$. Then for $\epsilon>0$, there exists $j_1$ such that for all $j\geq j_1$
one has 
\begin{equation}\label{s1}
\n{u-u_j}_{C([0,T],\F L^1)}<\epsilon
\end{equation}
where \[u=\sum_{k=1}^\infty U_k[u_0],\quad\text{and}\quad u_j=\sum_{k=1}^j U_k[u_0].\]
Note that $u,u_j\in B_{2M}^T$ for all $j$ as $0<T\ll M^{-1}$. Using the continuity of $\Psi$ on $B_{2M}^T$ we find $j_2$ such that for all $j\geq j_2$ 
\begin{equation}\label{s2}
\n{\Psi(u)-\Psi(u_j)}_{C([0,T],\F L^1)}<\epsilon.
\end{equation}
Note that 
\begin{align*}
u_j-\Psi(u_j)&=\sum_{k=1}^j U_k[{u}_0]-U(t){u}_0+\frac{i}{2}\mathcal{N}(u_j,u_j)\\
&=\sum_{k=2}^j U_k[{u}_0]+\frac{i}{2}\sum_{1\leq k_1,k_2\leq j}\mathcal{N}(U_{k_1}[{u}_0],U_{k_2}[{u}_0])\\
&=\frac{i}{2}\sum_{k=j+1}^{2j} \sum_{{1\leq k_1,k_2\leq j}\atop{k_1+k_2=k}}\mathcal{N}(U_{k_1}[{u}_0],U_{k_2}[{u}_0])=-\sum_{k=j+1}^{2j} U_{k,j}[{u}_0]
\end{align*}
where we set 
\[U_{k,j}[{u}_0]=-\frac{i}{2}\sum_{{1\leq k_1,k_2\leq j}\atop{k_1+k_2=k}}\mathcal{N}(U_{k_1}[{u}_0],U_{k_2}[{u}_0]).\]
Note that $U_{k,j}$ has a less number of term in the sum above compared to that of $U_k$. Therefore proceeding as in the proof of Lemma \ref{est}, one achieves same estimates for $U_{k,j}$.
Therefore using $0<T\ll M^{-1}$
\begin{align*}
\n{u_j-\Psi(u_j)}_{C([0,T],\F L^1)}&\leq\sum_{k=j+1}^{2j}\n{U_{k,j}[{u}_0]}_{C([0,T],\F L^1)}\\
&\leq\sum_{k=j+1}^{2j}c^{k-1}T^{k-1}\n{{u}_0}_{\mathcal{F}L^1}^k\\
&\leq M\sum_{k=j+1}^\infty(cTM)^{k-1}\leq2 M(cMT)^j.
\end{align*}
Then there exists $j_3$ such that for  $j\geq j_3$ one has
\begin{equation}\label{s3}
\n{u_j-\Psi(u_j)}_{C([0,T],\F L^1)}<\epsilon.
\end{equation}
Therefore from \eqref{s1}, \eqref{s2} and \eqref{s3} one has
\[\n{u-\Psi(u)}_{C([0,T],\F L^1)}<3\epsilon.\]
Thus $u$ is the required fixed point for $\Psi$. 
\end{proof}

\section{Proof of Theorem \ref{mt}}\label{S4}
We first prove NI with infinite loss of regularity at general data in $\F L^1(\m)\cap{X}_s^{p,q}(\m)$. Subsequently, for general data in ${X}_s^{p,q}(\m)$ we use the density of $\F L^1(\m)\cap{X}_s^{p,q}(\m)$ in $\mathcal{X}_s^{p,q}(\m)$ ($s<0$). So let us begin with $u_0\in\F L^1(\m)\cap{X}_s^{p,q}(\m)$.  Now define $\phi_{0,N}$ on $\m$ via   the following relation
 \begin{equation}\label{kid}
\F{\phi_{0,N}}(\xi)=R\chi_{I_N}(\xi) \quad (\xi \in \widehat{\m})
\end{equation}
where  $ I_N=[-N-1,-N+1]\cup[N-1,N+1]$ and  $ N\gg1,R\gg1$ to be chosen later.  Note  that 
\begin{align}\label{1}
\n{\phi_{0,N}}_{\widehat{w}_s^{p,q}}\sim RN^{s}.
\end{align}
Let us set \begin{equation}\label{data}
u_{0,N}=u_0+\phi_{0,N}
\end{equation}

\begin{lemma}[See Lemma 3.6. in \cite{kishimoto2019remark}]\label{kms0}
There exists $C>0$ such that for  $u_0$ satisfying \eqref{kid} and $k\geq 1$, we have 
\[\left|  \operatorname{supp}\F{ U_k[\phi_{0,N}]} (t)\right| \leq C^k
  ,\quad \forall t\geq 0.\]
\end{lemma}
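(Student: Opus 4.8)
The plan is to prove the support bound by strong induction on $k$, exactly mirroring the recursive structure of the Picard iterates in Definition \ref{imd}. For $k=1$ we have $U_1[\phi_{0,N}](t) = U(t)\phi_{0,N}$, and since $\F U(t)\phi_{0,N}(\xi) = e^{it\varphi(\xi)}R\chi_{I_N}(\xi)$, the Fourier support is contained in $I_N$, whose measure is $4 \leq C$ provided we choose $C \geq 4$. This establishes the base case.

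For the inductive step, assume the claim holds for all labels $1 \leq j \leq k-1$. By Definition \ref{imd},
\[
U_k[\phi_{0,N}](t) = -\frac{i}{2}\sum_{{k_1,k_2\geq 1}\atop{k_1+k_2=k}} \mathcal{N}\big(U_{k_1}[\phi_{0,N}], U_{k_2}[\phi_{0,N}]\big)(t),
\]
and the Fourier transform of $\mathcal{N}(u,v)(t)$ at frequency $\xi$ involves $\int_0^t e^{i(t-\tau)\varphi(\xi)}\varphi(\xi)(\F u \ast \F v)(\xi)(\tau)\,d\tau$. The multiplier $e^{i(t-\tau)\varphi(\xi)}\varphi(\xi)$ does not enlarge the frequency support, so $\operatorname{supp}\F \mathcal{N}(u,v)(t) \subseteq \operatorname{supp}\F u(\tau) + \operatorname{supp}\F v(\tau)$ (summed over $\tau \in [0,t]$, but by induction each of these supports is contained in a fixed set independent of $\tau$). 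Hence
\[
\big|\operatorname{supp}\F U_k[\phi_{0,N}](t)\big| \leq \sum_{{k_1,k_2\geq 1}\atop{k_1+k_2=k}} \big(\big|\operatorname{supp}\F U_{k_1}[\phi_{0,N}]\big| + \big|\operatorname{supp}\F U_{k_2}[\phi_{0,N}]\big|\big) \leq \sum_{{k_1,k_2\geq 1}\atop{k_1+k_2=k}} \big(C^{k_1} + C^{k_2}\big).
\]
Using $C^{k_1} + C^{k_2} \leq 2C^{k-1}$ (since $k_1, k_2 \leq k-1$) and that the number of pairs is $k-1$, the right side is at most $2(k-1)C^{k-1}$. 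To close the induction I need $2(k-1)C^{k-1} \leq C^k$, i.e. $C \geq 2(k-1)$, which fails for large $k$ with a fixed $C$; so a slightly more careful bookkeeping is needed.

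The fix — and I expect this to be the only real subtlety — is to track the measure of a \emph{Minkowski sum} rather than summing measures naively: if $A \subseteq \widehat{\m}$ is a union of at most $m$ intervals each of length $\leq 2$, then $A+A$ is a union of at most $m^2$ intervals each of length $\leq 4$, so $|A+A| \leq 4m^2$; more usefully, one should induct on a bound for the \emph{number} of intervals and their common length simultaneously, or simply observe that $\F U_k[\phi_{0,N}](t)$ is supported in a set of the form $\bigcup (\text{sums of } k \text{ intervals from } \{\pm N + [-1,1]\})$, each summand having length $\leq 2k$ and there being at most $2^k$ distinct centers (each center is $jN$ for $|j| \leq k$ of the same parity as $k$, so in fact at most $k+1$ centers). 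This gives $|\operatorname{supp}\F U_k[\phi_{0,N}](t)| \leq (k+1)\cdot 2k \leq C^k$ for a suitable absolute $C$ (e.g. $C = 10$ works for all $k \geq 1$ after checking small cases). Either route works; the cleanest writeup is to carry the inductive hypothesis in the sharper form "$\F U_k[\phi_{0,N}](t)$ is supported in at most $N$-independent finitely many intervals of total length $\leq C^k$" and verify the Minkowski-sum step absorbs the combinatorial factor. The key point throughout is that the BBM multiplier $\varphi(\xi) = \xi/(1+\xi^2)$ and the propagator phase $e^{it\varphi(\xi)}$ are Fourier multipliers and hence preserve supports, so all the growth comes purely from the convolutions in the nonlinearity.
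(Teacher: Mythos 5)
Your final argument is correct and is essentially the proof of the cited Lemma 3.6 of Kishimoto (the paper itself only cites it): since the propagator and the multiplier $\varphi(D_x)$ preserve Fourier supports and the nonlinearity only produces convolutions, $\operatorname{supp}\F U_k[\phi_{0,N}](t)$ lies in the $k$-fold sumset of $I_N$, i.e.\ in at most $k+1$ intervals of length $2k$ centered at the points $jN$ with $|j|\le k$, giving measure at most $2k(k+1)\le C^k$ uniformly in $t$ and $N$. You were right to discard the first naive induction on measures alone; the sharper structural form of the inductive hypothesis is exactly what is needed, and your write-up of that step closes the argument.
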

\subsection{Estimates in $\widehat{w}_s^{p,q}(\m)$}
\begin{lemma}\label{d0}Let $u_0$  be given by \eqref{kid}, $s\leq0$ and $1\leq p, q \leq  \infty.$ Then there exists $C$ such that
  \begin{enumerate}
      \item $\n{{u}_{0,N}-{u}_0}_{\widehat{w}_s^{p,q}}\lesssim RN^s$\label{1d1}
      \item $\|U_1[{u}_{0,N}](t)\|_{\widehat{w}_s^{p,q}} \lesssim  1+RN^s$\label{2d1}
      \item $\|U_2[{u}_{0,N}](t)-U_2[{\phi}_{0,N}](t)\|_{\widehat{w}_s^{p,q}}\lesssim tR$\label{3d1}
      \item $\|U_k[\vec{u}_{0,N}](t)\|_{\widehat{w}_s^{p,q}} \lesssim C^kR^{k}t^{k-1}.$\label{4d1}
  \end{enumerate} 
\end{lemma}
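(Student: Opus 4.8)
The plan is to establish each of the four estimates by unwinding the definitions and invoking the general bounds from Section \ref{S3}. Throughout, the key structural fact is that $\F\phi_{0,N}$ is supported in $I_N$ — a set of measure $4$, independent of $N$ — and that $\widehat{w}^{p,q}_s$-norms of functions with frequency support of bounded size are comparable (up to $N$-independent constants and powers of $N$) to their $\F L^1$-norms, which is exactly what makes Lemma \ref{kms0} so useful.

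For \eqref{1d1}: since $u_{0,N}-u_0=\phi_{0,N}$, this is literally \eqref{1}, whose proof is a direct computation: $\n{\phi_{0,N}}_{\widehat{w}^{p,q}_s}= \big\| \|\chi_{n+Q_1}R\chi_{I_N}\|_{L^p} \<n\>^s\big\|_{\ell^q_n}$, and only $O(1)$ values of $n$ (those near $\pm N$) contribute, each giving $\|\chi_{n+Q_1}R\chi_{I_N}\|_{L^p}\lesssim R$ and $\<n\>^s\sim N^s$, so the whole thing is $\lesssim RN^s$ (in fact $\sim RN^s$). For \eqref{2d1}: $U_1[u_{0,N}](t)=U(t)u_{0,N}$, so by Lemma \ref{est0}(1) its norm equals $\n{u_{0,N}}_{\widehat{w}^{p,q}_s}\leq \n{u_0}_{\widehat{w}^{p,q}_s}+\n{\phi_{0,N}}_{\widehat{w}^{p,q}_s}\lesssim 1+RN^s$, using $u_0\in X_s^{p,q}$ fixed and part \eqref{1d1}. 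For \eqref{4d1}: apply Lemma \ref{est} with $u_0$ replaced by $\phi_{0,N}$; this gives $\n{U_k[\phi_{0,N}](t)}_{\widehat{w}^{p,q}_s}\leq (ct)^{k-1}\n{\phi_{0,N}}_{\F L^1}^{k-1}\n{\phi_{0,N}}_{\widehat{w}^{p,q}_s}$, and $\n{\phi_{0,N}}_{\F L^1}=R\,|I_N|=4R$ while $\n{\phi_{0,N}}_{\widehat{w}^{p,q}_s}\lesssim RN^s\lesssim R$ for $s\le 0$, so altogether $\lesssim C^k t^{k-1}R^k$. (Here I read $\vec u_{0,N}$ in the statement as $\phi_{0,N}$; if it is meant to be $u_{0,N}$ one also uses $\n{u_{0,N}}_{\F L^1}\le \n{u_0}_{\F L^1}+4R\lesssim R$.)

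The genuinely substantive estimate is \eqref{3d1}, the bound $\n{U_2[u_{0,N}](t)-U_2[\phi_{0,N}](t)}_{\widehat{w}^{p,q}_s}\lesssim tR$. Expanding via Definition \ref{imd}, $U_2[v](t)=-\tfrac{i}{2}\mathcal N(U(t)v,U(t)v)(t)$ is quadratic in $v$, so the difference is the cross terms: $U_2[u_{0,N}]-U_2[\phi_{0,N}]=-\tfrac{i}{2}\big(\mathcal N(U(\cdot)u_0,U(\cdot)u_0)+\mathcal N(U(\cdot)u_0,U(\cdot)\phi_{0,N})+\mathcal N(U(\cdot)\phi_{0,N},U(\cdot)u_0)\big)$. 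The first term is $O(t)$ with an $N$-independent constant by Lemma \ref{est0}(2), since $\n{u_0}_{\F L^1}$ and $\n{u_0}_{\widehat{w}^{p,q}_s}$ are fixed. For the two cross terms I would use Lemma \ref{est0}(2) putting $\phi_{0,N}$ in the $\F L^1$-slot: $\n{\mathcal N(U(\cdot)u_0,U(\cdot)\phi_{0,N})(t)}_{\widehat{w}^{p,q}_s}\le t\,\n{u_0}_{\F L^1}\n{\phi_{0,N}}_{\widehat{w}^{p,q}_s}\lesssim tRN^s\le tR$, and similarly with the roles swapped, $\n{\mathcal N(U(\cdot)\phi_{0,N},U(\cdot)u_0)(t)}_{\widehat{w}^{p,q}_s}\le t\,\n{\phi_{0,N}}_{\F L^1}\n{u_0}_{\widehat{w}^{p,q}_s}\lesssim tR$. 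Summing the three contributions gives $\lesssim tR$, as claimed. The main obstacle is bookkeeping: one must make sure that in each cross term the factor carrying the $R$ (equivalently, $\phi_{0,N}$) is measured in the $\widehat{w}^{p,q}_s$-norm at most once and never squared — otherwise one would pick up $R^2$ — which is precisely why Lemma \ref{est0}(2) is stated asymmetrically with an $\F L^1$-norm on one factor, and why $\n{\phi_{0,N}}_{\F L^1}\sim R$ (no $N^s$ gain) is harmless at this linear-in-$R$ level.

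Assembling: \eqref{2d1} follows from \eqref{1} and \eqref{1d1} as above, \eqref{4d1} from Lemma \ref{est}, \eqref{1d1} is \eqref{1}, and \eqref{3d1} from the trilinear decomposition plus Lemma \ref{est0}. I expect the write-up to be short, with \eqref{3d1} getting the bulk of the space.
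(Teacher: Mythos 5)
Your proposal is correct, and for parts \eqref{1d1}--\eqref{3d1} it runs along the same lines as the paper: \eqref{1d1} is exactly \eqref{1}, \eqref{2d1} is unitarity (Lemma \ref{est0}) plus the triangle inequality, and your cross-term expansion of $U_2[u_{0,N}]-U_2[\phi_{0,N}]$ with $\phi_{0,N}$ placed once in the $\F L^1$-slot and once in the $\widehat{w}^{p,q}_s$-slot is precisely the $k=2$ case of the paper's decomposition $U_k[u_{0,N}]-U_k[\phi_{0,N}]=\sum_{(\psi_1,\psi_2)\in\mathcal C}\mathcal N(U_{k_1}[\psi_1],U_{k_2}[\psi_2])$, where every pair in $\mathcal C$ contains at least one $u_0$. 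Where you genuinely diverge is part \eqref{4d1}: the paper does \emph{not} apply Lemma \ref{est} directly to the perturbed datum; it splits $U_k[u_{0,N}]=U_k[\phi_{0,N}]+I_k$, bounds $I_k\lesssim C^k t^{k-1}R^{k-1}\|u_0\|_{\widehat w^{p,q}_s}$ by the multilinear expansion above (item \eqref{3d1} being its $k=2$ instance), and bounds $U_k[\phi_{0,N}]$ by an $L^\infty_\xi$-type estimate combined with the support-counting Lemma \ref{kms0}, i.e. $\|U_k[\phi_{0,N}](t)\|_{\widehat w^{p,q}_s}\lesssim (ct)^{k-1}R^k\,\mu(\operatorname{supp}\F U_k)^{1/p}\|\<n\>^s\|_{\ell^q(\operatorname{supp})}\lesssim C^k t^{k-1}R^k$, using $s\le 0$. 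Your route — Lemma \ref{est} applied to $u_{0,N}$ (or to $\phi_{0,N}$) together with $\|u_{0,N}\|_{\F L^1}\lesssim R$ and $\|u_{0,N}\|_{\widehat w^{p,q}_s}\lesssim 1+RN^s\lesssim R$ — gives the same $C^kR^kt^{k-1}$ bound (in fact $R^kN^s$ for the pure $\phi_{0,N}$ part), is legitimate because the constant in Lemma \ref{est} is independent of the datum, and bypasses Lemma \ref{kms0} altogether; what the paper's longer route buys is the uniform-in-$k$ refinement that the \emph{difference} $U_k[u_{0,N}]-U_k[\phi_{0,N}]$ costs only $R^{k-1}$, though only the $k=2$ case is ever used, so your more economical argument loses nothing needed for Theorem \ref{mt}. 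One small bookkeeping point: when you switch from $\phi_{0,N}$ to $u_{0,N}$ in \eqref{4d1} you should record the bound for both norms entering Lemma \ref{est}, i.e. also $\|u_{0,N}\|_{\widehat w^{p,q}_s}\lesssim R$ (which is just your part \eqref{2d1}), not only the $\F L^1$ one — but this is immediate, not a gap.
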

\begin{proof}[{\bf Proof}]
\eqref{1d1} follows from \eqref{1}.
By Lemma \ref{est0} and \eqref{1}, we have  $\|U_1[\phi_{0,N}](t)\|_{\widehat{w}_s^{p,q}}=\n{\phi_{0,N}}_{\widehat{w}_s^{p,q}} \sim RN^s$. Then \eqref{2d1} follows by using triangle inequality. 
By Lemma \ref{est} and \eqref{1}, we obtain
\begin{align*}
\|{U_k[{\phi}_{0,N}](t)}\|_{\widehat{w}_s^{p,q}}
&\leq \sup_{\xi \in \mh} |  \F{U_k[{\phi}_{0,N}]} (t, \xi)| \mu_{\mh}(\operatorname{supp}\F{ U_k[{\phi}_{0,N}]} (t))^{1/p}\n{\<n\>^s}_{\ell^q(n\in{\rm supp }\ \F U_k[\phi_{0,N}](t))}\\
& \lesssim (ct)^{k-1}    R^{k} \n{\<n\>^s}_{\ell^q(n\in{\rm supp }\ \F U_k[\phi_{0,N}](t))}.
\end{align*}
where ${\mu_{\mh}}(A)$ denotes the $\mh$-measure of the set $A$.  Since $s\leq0$, for any bounded set $D\subset \R$, we have 
\[\| \langle n\rangle^s \|_{\ell^q (  n \in D )}\leq \| \langle n\rangle^s \|_{\ell^q ( n \in B_D )}\]
where $B_D\subset \R$ is the Interval centred at the origin with $|D|=|B_D|.$  
In view of this and Lemma \ref{kms0}, 
we obtain 
\begin{equation*}
\| \langle n\rangle^s \|_{\ell^q( \operatorname{supp}
  \widehat{U_k}[\phi_{0,N}](t))}\leq
\| \langle n\rangle^s \|_{\ell^q ( \{ |n| \leq C^{k/d} \})}
\lesssim   C^{k/q}. 
\end{equation*}
Therefore 
\begin{align}\label{5d1}
\|{U_k[{\phi}_{0,N}](t)}\|_{\widehat{w}_s^{p,q}}\leq C^kt^{k-1}R^{k}.
\end{align}
Now observe that 
\begin{align*}
I_{k}(t):=&U_k[{u}_{0,N}](t)-U_k[{\phi}_{0,N}](t)\\
&=\sum_{{k_1, k_2 \geq 1}\atop{k_1+k_2=k}}\mathcal{N}(U_{k_1}[{u}_0+{\phi}_{0,N}],U_{k_2}[{u}_0+{\phi}_{0,N}])-\mathcal{N}(U_{k_1}[\phi_{0,N}],U_{k_2}[\phi_{0,N}])\\
&=\sum_{{k_1,k_2 \geq 1}\atop{k_1+k_2 =k}}\sum_{({\psi_1},{\psi}_{2})\in\mathcal{C}}\mathcal{N}(U_{k_1}[{\psi}_1],U_{k_2}[{\psi}_{\s}])
\end{align*}
where $\mathcal{C}=\{{u}_0,{\phi}_{0,N}\}^{2}\setminus\{({\phi}_{0,N},{\phi}_{0,N})\}$. Observe that $\mathcal{C}$ has atleast one coordinate as $\vec{u}_0$.
Using Lemma \ref{est0} and the proof of Lemma \ref{est}
 it follows that
\begin{align*}
\|I_k(t)\|_{\widehat{w}_s^{p,q}}
& \lesssim \sum_{{k_1, k_2\geq 1}\atop{k_1+k_2=k}}\sum_{(v_1,v_2)\in \mathcal{C}}\int_0^t\|U_{k_1}[{v}_1](\tau)\|_{\widehat{w}_s^{p,q}}\|U_{k_2}[v_2](\tau)\|_{{\F L^1}}d\tau\\
&\leq (2^2-1)2\|{u}_0\|_{\widehat{w}_s^{p,q}} (\|{u}_0\|_{{\F L^1}}^{k-1}+\|{\phi}_{0,N}\|_{{\F L^1}}^{k-1})\int_0^t\tau^{k-2}d\tau\sum_{{k_1,k_2 \geq 1}\atop{k_1+k_2 =k}}b_{k_1} b_{k_2}\\
&\leq 12b_kt^{k-1}R^{k-1}\|{u}_0\|_{\widehat{w}_s^{p,q}}\leq C^kt^{k-1}R^{k-1}\|{u}_0\|_{\widehat{w}_s^{p,q}}.
\end{align*}
as $R\gg1$. Note that \eqref{3d1} is the particular case $k=2$ and \eqref{4d1} follows using the above and \eqref{5d1}.
\end{proof}

\begin{lemma}\label{d2}  Let $u_0$  be given by \eqref{kid}, $1\leq p \leq  \infty$, $s\in\R$ and   $0<T\ll 1,$ then we   have\[ \|U_2 [{\phi}_{0,N}] (T)\|_{\widehat{w}_s^{p,q}}\geq \left\| \left\lVert \chi_{n+Q_1}(\xi)\F U_2 [{\phi}_{0,N}] (T)(\xi)\right\rVert_{L_\xi^p} \<n\>^{s} \right\|_{\ell^q(n=1)} 
\gtrsim  R^2T. \]
\end{lemma}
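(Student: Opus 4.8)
The plan is to reduce the assertion to a pointwise lower bound on $\abs{\F U_2[\phi_{0,N}](T)(\xi)}$ over the unit frequency box $1+Q_1=(1/2,3/2]$, and then unwind the Duhamel formula explicitly. The first inequality in the statement is free: the $\widehat{w}_s^{p,q}$-norm is the $\ell^q_n(\z)$-norm of the nonnegative sequence $n\mapsto\n{\chi_{n+Q_1}(\xi)\F U_2[\phi_{0,N}](T)(\xi)}_{L^p_\xi}\<n\>^s$, hence dominates the single term $n=1$; and $\<1\>^s=2^{s/2}$ is a fixed constant. So it remains to show $\n{\chi_{1+Q_1}(\xi)\F U_2[\phi_{0,N}](T)(\xi)}_{L^p_\xi}\gtrsim R^2T$, uniformly in $N\gg1$ and $0<T\ll1$.

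Next I would make $\F U_2[\phi_{0,N}](T)$ explicit. By Definition \ref{imd} one has $U_2[\phi_{0,N}](t)=-\tfrac i2\mathcal N(U(\cdot)\phi_{0,N},U(\cdot)\phi_{0,N})(t)$, and since $\F[U(\tau)\phi_{0,N}](\eta)=Re^{i\tau\varphi(\eta)}\chi_{I_N}(\eta)$, computing the convolution and interchanging the two (absolutely convergent, compactly supported) integrations yields
\begin{equation*}
\F U_2[\phi_{0,N}](T)(\xi)=-\frac i2\,R^2\,\varphi(\xi)\,e^{iT\varphi(\xi)}\int_{I_N\cap(\xi-I_N)}\Bigl(\int_0^T e^{i\tau\Phi(\xi,\eta)}\,d\tau\Bigr)\,d\eta,\qquad \Phi(\xi,\eta):=\varphi(\eta)+\varphi(\xi-\eta)-\varphi(\xi).
\end{equation*}

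The key point is then that the BBM symbol is bounded: $\abs{\varphi(r)}=\tfrac{\abs r}{1+r^2}\le\tfrac12$ for every real $r$, so $\abs{\Phi}\le\tfrac32$ and, for small $T$, the inner time integral carries essentially no oscillation,
\begin{equation*}
\Bigl|\int_0^T e^{i\tau\Phi(\xi,\eta)}\,d\tau-T\Bigr|\le\int_0^T\tau\,\abs{\Phi(\xi,\eta)}\,d\tau=\tfrac12 T^2\abs{\Phi(\xi,\eta)}\le T^2.
\end{equation*}
Thus the $\eta$-integral equals $T\,\abs{I_N\cap(\xi-I_N)}$ up to a relative error $O(T)$, and I am left with the elementary measure computation of the resonant box. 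For $\xi\in(1/2,3/2]$ and $N$ larger than an absolute constant, the two ``diagonal'' intersections ($[-N-1,-N+1]$ with its $\xi$-translate, and likewise for $[N-1,N+1]$) are empty, while $[-N-1,-N+1]\cap(\xi-[N-1,N+1])=[\xi-N-1,-N+1]$ and $[N-1,N+1]\cap(\xi-[-N-1,-N+1])=[\xi+N-1,N+1]$, each of length $2-\xi$; hence $\abs{I_N\cap(\xi-I_N)}=2(2-\xi)\in[1,3)$, i.e. $\asymp1$ uniformly in $\xi$ and $N$. Since moreover $\varphi$ increases on $(0,1)$ and decreases on $(1,\infty)$, $\varphi(\xi)\ge\varphi(1/2)=\tfrac25$ on $(1/2,3/2]$. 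Putting these together gives $\abs{\F U_2[\phi_{0,N}](T)(\xi)}\gtrsim R^2T$ for every $\xi\in1+Q_1$, and since $1+Q_1$ is an interval of length one this yields $\n{\chi_{1+Q_1}(\xi)\F U_2[\phi_{0,N}](T)(\xi)}_{L^p_\xi}\gtrsim R^2T$ for all $p\in[1,\infty]$, as needed. On $\td$ the same computation applies with the $\eta$-integral replaced by a sum over the lattice points of $I_N\cap(1-I_N)$, of which there are four, so the constant is again $\asymp1$.

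The main thing to be careful about --- and, really, the only thing --- is that this whole reduction hinges on $\varphi$ being a \emph{bounded} multiplier: that is what makes the resonance function $\Phi$ bounded and the time-localized phase $T\Phi$ genuinely non-oscillatory, collapsing what looks a priori like a delicate oscillatory double integral into the measure count above. Beyond that I would only need to keep the bookkeeping of which pieces of $I_N\cap(\xi-I_N)$ survive straight, and to make sure the thresholds hidden in ``$N\gg1$'' and ``$0<T\ll1$'' are absolute constants, so that the resulting bound $\gtrsim R^2T$ is uniform in the way required when this lemma is fed into the proof of Theorem \ref{mt}.
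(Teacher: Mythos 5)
Your proof is correct and follows essentially the same route as the paper: expand $\F U_2[\phi_{0,N}](T)$ via Duhamel, note the time phase $T\Phi$ is $\ll1$ so the time integral is $\gtrsim T$, bound $\varphi(\xi)$ below on the unit frequency window, and lower-bound the interaction set $I_N\cap(\xi-I_N)$ (the paper does this via $\chi_{a+[-1,1]}\ast\chi_{b+[-1,1]}\geq\chi_{a+b+[-1,1]}$, you by an explicit measure count) to get the pointwise bound $\gtrsim R^2T$ near $\xi=1$. The only cosmetic difference is that the paper computes the resonance function $\Phi$ explicitly to see $|\Phi|\sim1$, while you deduce $|\Phi|\le 3/2$ directly from $|\varphi|\le\tfrac12$, which is equally sufficient since only an upper bound on $|\Phi|$ is needed.
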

\begin{proof}[{\bf Proof}]
For notational convenience we put  $$\Gamma_\xi=\{(\xi_1,\xi_2):\xi_1+\xi_2=\xi\}\quad \text{and}\quad\Phi=c(-\varphi(\xi)+\varphi(\xi_1)+\varphi(\xi_2)).$$
 Using the symmetry of set $\Sigma$, we have
\begin{align}\label{dspl}
\F U_2[u_0] (T)(\xi)
&=\int_0^Te^{ic(T-t)\varphi(\xi)}\varphi(\xi)\(\F U_1(t)u_0\ast\F{U_1(t)u_0}\) dt \nonumber \\
&=\int_0^Te^{ic(T-t)\varphi(\xi)}\varphi(\xi)\left[e^{i ct\varphi}\F u_0\ast e^{i ct\varphi}\F u_0\right] (\xi) dt \nonumber\\
&=e^{icT\varphi(\xi)}\varphi(\xi)R^2\int_0^T\int_{\Gamma_\xi}{e^{it\Phi}}\chi_{I_N}(\xi_1)\chi_{I_N}(\xi_2)d\Gamma_{\xi} dt.
\end{align}
Note that with $\xi_1+\xi_2=\xi$ one has $$\Phi(\xi,\xi_1,\xi_2)=c\frac{\xi \xi_1 \xi_2 (\xi^2-\xi_1\xi_2+3)}{(1+\xi_1^2)(1+ \xi_2^2) (1+\xi^2)}$$
and hence for $\xi\in [\frac{1}{2},1]$ and $\xi_1, \xi_2\in I_N,$  we have  $|\Phi|\sim 1.$  Hence, $|t\Phi|\ll 1$  for $0<t\ll 1$ and so 
\[ \text{Re} \int_0^T e^{it\Phi} dt \geq \frac{T}{2}. \] 
Also note that $\abs{\varphi(\xi)}\gtrsim 1$ for $\xi\in [\frac{1}{2},1]$. Thus, we have for $\xi\in [\frac{1}{2},1]\subset I_N+I_N$
\begin{align}\label{M2}
\abs{\F U_{2} [u_0] (T)(\xi)}&\gtrsim R^2T\int_{\Gamma_\xi}\chi_{I_N}(\xi_1)\chi_{I_N}(\xi_2)d\Gamma_{\xi} 
= R^2T\chi_{I_N}\ast\chi_{I_N}(\xi)\geq R^2T\chi_{[-1,1]}
\end{align}
as $\chi_{a+[-1,1]}\ast\chi_{b+[-1,1]}\geq\chi_{a+b+[-1,1]}$. The above pointwise estimate immediately  gives the desired estimate:
\[\|U_2[{\phi}_{0,N}] (T)\|_{\widehat{w}_s^{p,q}}\geq\left\| \left\lVert \chi_{n+Q_1}(\xi)\F U_2 [\vec{\phi}_{0,N}] (T)(\xi)\right\rVert_{L_\xi^p([\frac{1}{2},1]\cap\mh)} \<n\>^{s} \right\|_{\ell^q(n=1)} 
\gtrsim TR^2
\]provided $0<T\ll1$.
\end{proof}

\subsection{Estimates in $W_s^{2,q}(\R)$}
\begin{lemma}[inclusion]\label{INC} Let $p,q,q_1,q_2\in[1,\infty]$ and $s\in\R$. Then
\begin{enumerate}
\item $\|f\|_{W_s^{2,q}}\leq \|f\|_{\widehat{w}_s^{2,q}}$ if $q\leq 2$\label{Inclsn1}
\item $\|f\|_{W_s^{p,q_1}}\lesssim\|f\|_{W_s^{p,q_2}}$ if $q_1\geq q_2$\label{Inclsn2}
\end{enumerate}
\end{lemma}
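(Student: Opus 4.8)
The plan is to reduce everything to the frequency–uniform decomposition characterizations recalled in Section \ref{S2}, namely $\|f\|_{W_s^{p,q}}\asymp \big\| \|\oldsquare_n f \cdot \langle n\rangle^s\|_{\ell_n^q}\big\|_{L_x^p}$ and $\|f\|_{\widehat w_s^{p,q}}\asymp \big\| \|\chi_{n+Q_1}\F f\|_{L_\xi^p}\langle n\rangle^s\big\|_{\ell_n^q}$, and then to compare the two norms pointwise in $x$ (for the first part) or to invoke the monotonicity of $\ell^q$ norms in $q$ (for the second part). For part \eqref{Inclsn2}, which I would do first since it is the softer of the two, the key fact is simply that $\ell^{q_1}\hookrightarrow\ell^{q_2}$ with $\|a\|_{\ell^{q_1}}\le\|a\|_{\ell^{q_2}}$ whenever $q_1\ge q_2$. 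Applying this with the sequence $a_n = \|\oldsquare_n f\cdot\langle n\rangle^s\|$ pointwise in $x$, then taking the $L_x^p$ norm (which is monotone), gives $\|f\|_{W_s^{p,q_1}}\lesssim\|f\|_{W_s^{p,q_2}}$; the implicit constant comes only from the $\asymp$ in the two characterizations, so nothing delicate happens here.

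For part \eqref{Inclsn1}, the plan is to write, for fixed $x$,
\[
\big\|\oldsquare_n f(x)\,\langle n\rangle^s\big\|_{\ell_n^2}
=\Big(\sum_n \langle n\rangle^{2s}\,|\oldsquare_n f(x)|^2\Big)^{1/2}.
\]
Since $\oldsquare_n f = \F^{-1}(\sigma_n \F f)$ and $\sigma_n$ is supported in $n+[-1,1]$, I would bound $|\oldsquare_n f(x)|$ by a quantity controlled by $\|\chi_{m+Q_1}\F f\|_{L^2}$ for the finitely many $m$ with $m+Q_1$ meeting the support of $\sigma_n$ — this uses Bernstein/Hausdorff–Young on a unit-length frequency block together with $\|\sigma_n\|_\infty\le 1$. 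Squaring, summing in $n$, and reorganizing the finite overlap, one gets
\[
\big\|\oldsquare_n f(x)\,\langle n\rangle^s\big\|_{\ell_n^2}
\lesssim \Big(\sum_m \langle m\rangle^{2s}\,\|\chi_{m+Q_1}\F f\|_{L^2}^2\Big)^{1/2}.
\]
The right-hand side is \emph{independent of $x$}, so taking $L_x^2$ on the left (a domain of infinite measure!) cannot be done naively — and this is precisely the point where the hypothesis $q\le 2$ enters. The honest route is: for $q\le 2$ use instead Plancherel on each block, $\|\oldsquare_n f\|_{L_x^2}=\|\sigma_n\F f\|_{L_\xi^2}\le\|\chi_{n+Q_1'}\F f\|_{L^2}$ (enlarging $Q_1$ slightly to $Q_1'$ to cover $\mathrm{supp}\,\sigma_n\subset n+[-1,1]$), and then apply Minkowski's integral inequality in the form $\|\,\cdot\,\|_{L_x^2 \ell_n^q}\le \|\,\cdot\,\|_{\ell_n^q L_x^2}$, valid exactly because $q\le 2$. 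Concretely:
\[
\|f\|_{W_s^{2,q}}\asymp\Big\| \|\oldsquare_n f\cdot\langle n\rangle^s\|_{\ell_n^q}\Big\|_{L_x^2}
\le \Big\| \|\oldsquare_n f\|_{L_x^2}\,\langle n\rangle^s\Big\|_{\ell_n^q}
\le \Big\| \|\chi_{n+Q_1'}\F f\|_{L_\xi^2}\,\langle n\rangle^s\Big\|_{\ell_n^q}
\asymp \|f\|_{\widehat w_s^{2,q}},
\]
where the last $\asymp$ absorbs the finitely-overlapping enlargement $Q_1'\to Q_1$ into the implicit constant.

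The main obstacle, then, is not any single inequality but getting the order of the mixed norms right: the Wiener amalgam puts $\ell^q$ \emph{inside} $L^2_x$, while the Fourier amalgam (after Plancherel) is naturally $\ell^q$ \emph{of} $L^2_\xi$, so one must swap the order of a space norm and a sequence norm, and Minkowski's integral inequality permits this swap only when the outer Lebesgue exponent dominates the inner one — here $2\ge q$. I would make sure to state the $q\le 2$ restriction is used at exactly that swap, note that the overlap between $\mathrm{supp}\,\sigma_n$ and the unit cubes $m+Q_1$ is uniformly bounded (so $\langle n\rangle^s\asymp\langle m\rangle^s$ on overlaps and the block-count constant is harmless), and otherwise keep the argument to the two displayed chains above.
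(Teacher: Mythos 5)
Your proposal is correct and follows essentially the same route as the paper, which proves \eqref{Inclsn1} by blockwise Plancherel plus Minkowski's integral inequality (the swap of $L^2_x$ and $\ell^q_n$ being exactly where $q\le 2$ is used) and \eqref{Inclsn2} by the monotonicity of $\ell^q$ norms; you have merely spelled out the details, including the harmless finite-overlap bookkeeping between $\operatorname{supp}\sigma_n$ and the cubes $m+Q_1$. The only slip is notational: for $q_1\ge q_2$ the embedding is $\ell^{q_2}\hookrightarrow\ell^{q_1}$ (your displayed inequality $\|a\|_{\ell^{q_1}}\le\|a\|_{\ell^{q_2}}$ is the correct one and is what you actually use).
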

\begin{proof}
\eqref{Inclsn1} is a consequence of Minkowski inequality and Plancherel theorem  whereas \eqref{Inclsn2} follows from the fact that $\ell^{q_2}\hookrightarrow\ell^{q_1}$ if $q_1\geq q_2$. 
\end{proof}

\begin{lemma}\label{d0'}Let $u_0$  be given by \eqref{kid}, $s\leq0$ and $1\leq p \leq  \infty.$ Then there exists $C$ such that
  \begin{enumerate}
      \item $\n{{u}_{0,N}-{u}_0}_{W_s^{2,q}}\lesssim RN^s$
      \item $\|U_1[{u}_{0,N}](t)\|_{W_s^{2,q}} \lesssim  1+RN^s$
      \item $\|U_2[{u}_{0,N}](t)-U_2[{\phi}_{0,N}](t)\|_{W_s^{2,q}}\lesssim tR$
      \item $\|U_k[\vec{u}_{0,N}](t)\|_{W_s^{2,q}} \lesssim C^kR^{k}t^{k-1}$
  \end{enumerate} 
\end{lemma}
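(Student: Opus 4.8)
The plan is to show that Lemma \ref{d0'} follows from Lemma \ref{d0} together with the inclusion estimates in Lemma \ref{INC}. The key observation is that each of the four bounds in Lemma \ref{d0} was proved in the $\widehat{w}_s^{p,q}$ norm, and we only need the $W_s^{2,q}(\R)$ versions; moreover the quantities being estimated ($u_{0,N}-u_0=\phi_{0,N}$, $U_1[u_{0,N}]$, $U_2[u_{0,N}]-U_2[\phi_{0,N}]$ and $U_k[u_{0,N}]$) all have Fourier support of bounded (uniform in $N$) measure, except for $U_1[u_{0,N}]$ which splits as $U_1[u_0]+U_1[\phi_{0,N}]$ where the first piece is handled by the ambient hypothesis $u_0\in\F L^1\cap X_s^{p,q}$ and the second piece again has uniformly bounded Fourier support.

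First I would recall that, by Lemma \ref{INC}\eqref{Inclsn1}, $\|f\|_{W_s^{2,q}}\leq\|f\|_{\widehat{w}_s^{2,q}}$ whenever $q\leq 2$. Hence for $q\leq 2$ every one of the four estimates is immediate from the corresponding item of Lemma \ref{d0} with $p=2$. For $q>2$ I would instead use Lemma \ref{INC}\eqref{Inclsn2}: since $q>2$, $\|f\|_{W_s^{2,q}}\lesssim\|f\|_{W_s^{2,2}}=\|f\|_{\widehat{w}_s^{2,2}}=\|f\|_{H^s}$, and then apply Lemma \ref{d0} with $p=q=2$. In both regimes the right-hand sides produced are exactly $RN^s$, $1+RN^s$, $tR$ and $C^kR^kt^{k-1}$ respectively (the constants absorbed into $C$ and the implicit constants), which is what is claimed. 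So the proof is a two-case dispatch feeding into Lemma \ref{d0}.

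A cleaner alternative, which I would actually write out, is to redo the short support-counting argument of Lemma \ref{d0} directly in the $W_s^{2,q}$ norm, since the only facts used there were Lemma \ref{est0}, Lemma \ref{est}, Lemma \ref{kms0}, and the elementary monotonicity $\|\langle n\rangle^s\|_{\ell^q(n\in D)}\le\|\langle n\rangle^s\|_{\ell^q(n\in B_D)}$ for $s\le 0$; none of these is special to Fourier amalgam spaces, and Lemma \ref{est0} holds verbatim with $\widehat{w}_s^{p,q}$ replaced by $W_s^{2,q}$ because $U(t)$ is a Fourier multiplier of modulus one and the bilinear estimate only used $|\varphi|\le 1$ and convolution with an $\F L^1$ function. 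Thus for item (4) one writes, with $\square_n$ the frequency-uniform projections,
\[
\|U_k[\phi_{0,N}](t)\|_{W_s^{2,q}}\lesssim\big\|\,\|\square_n U_k[\phi_{0,N}](t)\|_{L^2}\langle n\rangle^s\,\big\|_{\ell^q(n\in\operatorname{supp}\F U_k[\phi_{0,N}](t))}\lesssim (ct)^{k-1}R^k\|\langle n\rangle^s\|_{\ell^q(|n|\le C^k)}\lesssim C^kR^kt^{k-1},
\]
and the remaining three items follow exactly as in the proof of Lemma \ref{d0}, splitting $U_k[u_{0,N}]-U_k[\phi_{0,N}]$ into the terms of $\mathcal C=\{u_0,\phi_{0,N}\}^2\setminus\{(\phi_{0,N},\phi_{0,N})\}$ and bounding each factor by Lemma \ref{est0} in the $W_s^{2,q}$ norm for one slot and in $\F L^1$ for the other.

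I do not expect any genuine obstacle: the whole content is that the $W_s^{2,q}$ norm enjoys the same structural properties (unitarity of $U(t)$, the bilinear $\F L^1$-estimate of Lemma \ref{est0}, and the support-counting bound via Lemma \ref{kms0}) that were used for $\widehat{w}_s^{p,q}$. The only mild point of care is the case $q>2$, where one cannot directly embed $\widehat{w}_s^{2,q}$ into $W_s^{2,q}$ by Minkowski; there one either routes through $H^s$ using Lemma \ref{INC}\eqref{Inclsn2}, or simply notes that the support-counting proof never needed $q\le 2$ in the first place. I would present the direct argument for uniformity with Lemma \ref{d0} and remark that the case $q\le 2$ also follows instantly from Lemma \ref{INC}\eqref{Inclsn1}.
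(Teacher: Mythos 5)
Your first route is precisely the paper's proof: for $q\le 2$ apply Lemma \ref{INC}\eqref{Inclsn1} and Lemma \ref{d0} with $p=2$, while for $q>2$ apply Lemma \ref{INC}\eqref{Inclsn2} together with $W^{2,2}_s=H^s=\widehat{w}^{2,2}_s$ and Lemma \ref{d0} with $p=q=2$, so the proposal is correct and essentially identical to the paper's argument. One caution about the ``direct'' alternative you say you would actually write out: the claim that Lemma \ref{est0} holds \emph{verbatim} with $\widehat{w}^{p,q}_s$ replaced by $W^{2,q}_s$ is not automatic, since the $W^{2,q}_s$ norm is not a function of $|\mathcal{F}f|$ alone (the $L^2_x$ norm sits outside the $\ell^q_n$ norm), so neither the exact invariance under $U(t)$ nor the bilinear bound transfers for free; likewise your first displayed inequality is a Minkowski exchange that does need $q\le 2$, contrary to the remark that the support-counting argument ``never needed $q\le 2$''. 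If you present the direct version, keep the $q>2$ case routed through $H^s$ via Lemma \ref{INC}\eqref{Inclsn2}, exactly as the paper does.
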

\begin{proof}
By Lemma \ref{INC}, we have 
\begin{align*}
\|\vec{u}_{0,N}-\vec{u}_0\|_{W^{2,q}_s} \lesssim\begin{cases}
\|\vec{u}_{0,N}-\vec{u}_0\|_{\widehat{w}^{2,q}_s}\lesssim RN^s&\text{ for }q\in[1,2]\\
 \|\vec{u}_{0,N}-\vec{u}_0\|_{W^{2,2}_s} \lesssim RN^s&\text{ for }q\in(2,\infty]
\end{cases}
\end{align*}  
using Lemma \ref{d0} \eqref{1d1}. Similarly, the other estimates also follow from Lemmata \ref{d0}.
\end{proof}

\begin{lemma}\label{d2'}  Let $u_0$  be given by \eqref{kid}, $1\leq p \leq  \infty$, $s\in\R$ and   $0<T\ll 1,$ then we   have\[ \|U_2 [{\phi}_{0,N}] (T)\|_{W_s^{2,q}}\geq \left\| \left\lVert\F^{-1} \sigma_n\F U_2 [{\phi}_{0,N}] (T)(\xi)\<n\>^{s} \right\rVert_{\ell^q(n=1)} \right\|_{L_\xi^2} 
\gtrsim  R^2T. \]
\end{lemma}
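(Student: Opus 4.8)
The plan is to reuse, essentially verbatim, the computation from the proof of Lemma~\ref{d2}, replacing the local-in-frequency restriction $\chi_{n+Q_1}$ by the frequency-uniform projection $\oldsquare_1$ and invoking Plancherel's theorem; the passage to $L^2$ is the only place the Hilbert-space structure is used.

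First I would recall from the proof of Lemma~\ref{d2}, specifically from the pointwise estimate \eqref{M2}, that for $0<T\ll1$ one has
\[
\abs{\F U_2[\phi_{0,N}](T)(\xi)}\ \gtrsim\ R^2 T,\qquad \xi\in[\tfrac12,1],
\]
which rests only on $\abs{\varphi(\xi)}\gtrsim1$ and $\abs{\Phi}\sim1$ on the relevant range together with $[\tfrac12,1]\subset I_N+I_N$, none of which refers to the ambient function space. Writing $f=U_2[\phi_{0,N}](T)$, I would then use the frequency-uniform characterization of $W^{2,q}_s(\R)$ recalled in Section~\ref{S2} and discard all blocks except the one with $n=1$ (which only decreases the inner $\ell^q_n$-norm), giving, up to the equivalence constants,
\[
\|f\|_{W^{2,q}_s}\ \gtrsim\ \n{\,\n{\oldsquare_n f\,\langle n\rangle^s}_{\ell^q_n(\z)}\,}_{L^2_x}\ \ge\ 2^{s/2}\,\n{\oldsquare_1 f}_{L^2_x}.
\]
By Plancherel's theorem $\n{\oldsquare_1 f}_{L^2_x}=\n{\sigma_1\,\F f}_{L^2_\xi}$; since $\rho_1(\xi)=\rho(\xi-1)=1$ on $[\tfrac12,1]$ while $1\le\sum_{\ell\in\z}\rho_\ell\lesssim1$ (bounded below by a single term and above by finite overlap), we get $\sigma_1\gtrsim1$ on $[\tfrac12,1]$, hence $\n{\sigma_1\,\F f}_{L^2_\xi}\ge\n{\sigma_1\,\F f}_{L^2([\frac12,1])}\gtrsim\n{\F f}_{L^2([\frac12,1])}$. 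Combining this with the pointwise bound and $\abs{[\tfrac12,1]}=\tfrac12$ gives $\n{\F f}_{L^2([\frac12,1])}\gtrsim R^2 T$, and chaining the displayed inequalities yields $\|U_2[\phi_{0,N}](T)\|_{W^{2,q}_s}\gtrsim R^2 T$, as required.

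There is essentially no obstacle here: every step is elementary, and the only point that needs a line of verification is the uniform lower bound $\sigma_1\gtrsim1$ on $[\tfrac12,1]$, which is immediate from the definitions of $\rho$ and $\sigma_n$ and is already implicit in the proof of Lemma~\ref{d2}.
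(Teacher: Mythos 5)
Your proposal is correct and follows essentially the same route as the paper: restrict the frequency-uniform characterization of $W^{2,q}_s$ to the single block $n=1$, convert $\n{\oldsquare_1 U_2[\phi_{0,N}](T)}_{L^2_x}$ to $\n{\sigma_1\F U_2[\phi_{0,N}](T)}_{L^2_\xi}$ by Plancherel, and conclude with the pointwise lower bound \eqref{M2} on $[\tfrac12,1]$. Your explicit check that $\sigma_1\gtrsim1$ on $[\tfrac12,1]$ is a detail the paper leaves implicit, but it is the same argument.
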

\begin{proof}
Note that using Plancherel theorem and \eqref{M2} we have
\begin{align*}
\|U_2 [{\phi}_{0,N}] (T)\|_{W^{2,q}_s}&\geq\left\| \lVert\F^{-1} \sigma_n\F U_2[{\phi}_{0,N}] (T)(x)\<n\>^{s}\rVert_{\ell^q(n=e_1)}  \right\|_{L_x^2} \\
&=2^{s/2}\left\|  \sigma_{e_1}\F U_2[\vec{\phi}_{0,N}] (T)(\xi)  \right\|_{L_\xi^2}\gtrsim R^2T.
\end{align*}
This completes the proof.
\end{proof}

\begin{proof}[{\bf Proof of Theorem \ref{mt}}]
We first consider the case $\mathcal{X}_s^{p,q}=\widehat{w}_s^{p,q}$.
If the initial data $u_{0,N}$ satisfies \eqref{data}, Corollary \ref{uc} guarantees existence of the solution to  \eqref{ibbm} and
the power series expansion in $\F L^1$ upto time  $
TR\ll1
$ (as $R\gg1$).
By Lemma \ref{d0}, we obtain
\begin{equation}\label{series}
\sum_{k=3}^\infty\n{U_k[u_{0,N}](T)}_{\widehat{w}_s^{p,q}}\lesssim T^2R^3
\end{equation}
provided $
TR\ll1.
$
Note that 
\begin{align*}
\n{u_N(T)}_{\widehat{w}_\theta^{p,q}}\geq\|{\n{\chi_{n+Q_1}\F u_N(T)}_{L^p}\<n\>^\theta}\|_{\ell^q(n=1)}\sim_{\theta,s}\n{\n{\chi_{n+Q_1}\F u_N(T)}_{L^p}\<n\>^s}_{\ell^q(n=1)}
\end{align*}
Using Corollary \ref{uc} and triangle inequality, we have 
\begin{align*}
&\n{u_N(T)}_{\widehat{w}_\theta^{p,q}}\\
&\gtrsim \n{\n{\chi_{n+Q_1}\F U_2[u_{0,N}](T)}_{L^p}\<n\>^s}_{\ell^q(n=1)}-c\bigg(\n{\n{\chi_{n+Q_1}\F U_1[u_{0,N}](T)}_{L^p}\<n\>^s}_{\ell^q(n=1)}\\&\ \ \ +\sum_{k=3}^\infty\n{\n{\chi_{n+Q_1}\F U_k[u_{0,N}](T)}_{L^p}\<n\>^s}_{\ell^q(n=1)}\bigg)\\
&\gtrsim \n{\n{\chi_{n+Q_1}\F U_2[u_{0,N}](T)}_{L^p}\<n\>^s}_{\ell^q(n=1)}-c\n{U_1 [\vec{u}_{0,N}] (T)}_{\widehat{w}_s^{p,q}}-c\sum_{k=3}^\infty\n{U_k [\vec{u}_{0,N}] (T)}_{\widehat{w}_s^{p,q}}.
\end{align*}
Let $m\in\mathbb{N}$. In order to ensure 
$ \|u_N(T)\|_{\widehat{w}_\theta^{p,q}} \gtrsim  \| U_2[\vec{u}_{0,N}](T)\|_{\widehat{w}_s^{p,q}}  \gg m$
we rely on the conditions 
\begin{numcases}{\n{\n{\chi_{n+Q_1}\F U_2[u_{0,N}](T)}_{L^2}\<n\>^s}_{\ell^q(n=1)}\gg}
\|U_1[\vec{u}_{0,N}](T)\|_{\widehat{w}_s^{p,q}},\label{rc1}\\
 \sum_{k=2}^{\infty}\left\| U_k\vec{u}_{0,N}](T) \right\|_{\widehat{w}_s^{p,q}},\label{rc2}\\
m.\label{rc3}
\end{numcases}
Thus to establish NI with infinite loss of regularity at $u_0$ in $\widehat{w}_s^{p,q}$ we claim that it is 
enough to have the followings
\begin{enumerate}
\item $CRN^s<1/m$\label{i}
\item \label{ii} $TR\ll1$
\item $TR^2\gg m$\label{iii}
\item $TR^2\gg T^2R^3$\label{iv} $\Leftrightarrow$ \eqref{ii} 
\item $0<T\ll 1$\label{v}
\end{enumerate} as $N\to\infty$.
Note that \eqref{i} ensures $\|{u_0-u_{0,N}}\|_{\widehat{w}_s^{p,q}}<1/m$ whereas \eqref{ii} ensures the convergence of the infinite series in view of Lemma \ref{d0}.
In order to use Lemma \ref{d2} we need \eqref{v}. In order to prove \eqref{rc2}, in view of Lemma \ref{d2}  and \eqref{series}, we need  \eqref{iv}. Condition \eqref{iii} implies \eqref{rc3} using Lemma \ref{d0} \eqref{3d1} and Lemma \ref{d2}. In order to prove \eqref{rc1}, we need \eqref{i} and \eqref{iii} by using Lemma \ref{d0} \eqref{2d1} and Lemma \ref{d2}. Thus,   it follows that 
\[\|{u_0-u_{0,N}}\|_{\widehat{w}_s^{p,q}}<1/m\quad\text{and}\quad\|u_N(T)\|_{\widehat{w}_\theta^{p,q}}> m.
\]
Hence,  the result is established. We shall now choose $R$ and $T$ as follows: 
\[
 R=N^r\quad  \text{and} \quad T=N^{-\epsilon}.
\]
where $r,\epsilon$ to be chosen below. 
Therefore, it is enough to check
\begin{align*}
CRN^s=CN^{r+s}<1/m,\quad
TR=N^{-\epsilon+r}\ll1,\quad
TR^2=N^{-\epsilon+2r}\gg m,\quad
T&=N^{-\epsilon}\ll1.
\end{align*}
Thus we only need to achieve:
\begin{itemize}
\item $r+s<0$
\item $-\epsilon+r<0$
\item $-\epsilon+2r>0$
\item $\epsilon>0$
\end{itemize} and take $N$ large enough.
Let us concentrate on the choice of $\epsilon>0$ first. Note that the second and third conditions in the above are equivalent with
\begin{align*}
r<\epsilon<2r.
\end{align*}To make room for $\epsilon$ we must have $
r>0.
$ 
On the other hand, since we want $\epsilon>0$ we also require\[
r>0.
\]
 Thus $r$ must satisfy
\[0<r<-s\]where the later condition comes from first condition. Thus it is enough to choose
$$r=-\frac{s}{3},\quad\epsilon=-\frac{s}{2}$$ which will satisfy all the above four conditions. Hence the result follows.

For the case $\mathcal{X}_s^{p,q}=W_s^{2,q}$ we use same argument as above: 
 Note that using Lemmata \ref{d0'}, \ref{d2'} we have
\begin{align*}
&\n{u_N(T)}_{W_{\theta}^{2,q}}\\
&\geq\n{\n{ \oldsquare_n u_N(T)\<n\>^\sigma}_{\ell^q(n=1)}}_{L^2}\sim_{ \theta,s}\n{\n{ \oldsquare_n u_N(T)\<n\>^s}_{\ell^q(n=1)}}_{L^2}\\
&\gtrsim\n{\n{\oldsquare_n U_2[u_{0,N}](T)\<n\>^s}_{\ell^q(n=1)}}_{L^2}- c\|U_1[u_{0,N}](T)\|_{W_s^{2,q}}- c \sum_{k=3}^{\infty}\left\| U_k[\vec{u}_{0,N}](T) \right\|_{W_s^{2,q}}\\
&\gtrsim\n{\n{\oldsquare_n U_2[u_{0,N}](T)\<n\>^s}_{\ell^q(n=1)}}_{L^2}\gg m.
\end{align*}
and $\n{\vec{u}_{0,N}-\vec{u}_0}_{W_s^{2,q}}<1/m$ provided we choose $R,N,T$ as in the case of $\widehat{w}_s^{p,q}$
\end{proof}

\begin{remark}\label{hs} It is easy to check that our proof  of main results will work even if   we replace the weight $\<\cdot\>^s$ by $|\cdot|^s$ in the function spaces involved.  Since the  analysis will be similar we omit the details.   We  just note that 
 as $\langle n \rangle^s \asymp |n|^s$ for large $n$,  we  have $\|\phi_{0, N}\|_{ \dot{ \widehat{w}}^{p,q}_s} \asymp RN^s$ where $\phi_{0,N}$ is as in \eqref{kid}.
Moreover,  it should work with any weight $n\mapsto (\omega(n))^s (s<0)$ which is decreasing in $|n|$ and behaves like $|n|^s$ as $n\to\infty$.
\end{remark}

\noindent
{\textbf{Acknowledgement}:} D.  G.  B is thankful to DST-INSPIRE (DST/INSPIRE/04/2016/001507) for the research grant.   S. H.  acknowledges Dept of Atomic Energy, Govt of India, for the financial support and Harish-Chandra Research Institute for the research facilities provided. 
\bibliographystyle{siam}
\bibliography{bbm}

\begin{thebibliography}{10}

\bibitem{alazman2006comparisons}
{\sc A.~Alazman, J.~Albert, J.~Bona, M.~Chen, J.~Wu, et~al.}, {\em Comparisons
  between the bbm equation and a boussinesq system}, Advances in Differential
  Equations, 11 (2006), pp.~121--166.

\bibitem{carlos}
{\sc C.~Banquet and E.~J. Villamizar-Roa}, {\em Time-decay and {S}trichartz
  estimates for the {BBM} equation on modulation spaces: existence of local and
  global solutions}, J. Math. Anal. Appl., 498 (2021), pp.~124934, 23.

\bibitem{bejenaru2006sharp}
{\sc I.~Bejenaru and T.~Tao}, {\em Sharp well-posedness and ill-posedness
  results for a quadratic non-linear {S}chr\"{o}dinger equation}, J. Funct.
  Anal., 233 (2006), pp.~228--259.

\bibitem{kasobenyi}
{\sc A.~Benyi and K.~A. Okoudjou}, {\em Modulation Spaces: With Applications to
  Pseudodifferential Operators and Nonlinear Schrodinger Equations}, Applied
  and Numerical Harmonic Analysis, Birkhauser, Bassel, 2020.

\bibitem{B2}
{\sc D.~G. Bhimani and R.~Carles}, {\em Norm inflation for nonlinear
  {S}chr\"{o}dinger equations in {F}ourier-{L}ebesgue and modulation spaces of
  negative regularity}, J. Fourier Anal. Appl., 26 (2020), pp.~Paper No.78, 34.

\bibitem{B3}
{\sc D.~G. Bhimani, M.~Grillakis, and K.~A. Okoudjou}, {\em The
  {H}artree-{F}ock equations in modulation spaces}, Comm. Partial Differential
  Equations, 45 (2020), pp.~1088--1117.

\bibitem{bhimani2021norm}
{\sc D.~G. Bhimani and S.~Haque}, {\em Norm inflation for nonlinear wave
  equations with infinite loss of regularity in wiener amalgam spaces}, arXiv
  preprint arXiv:2106.13635,  (2021).

\bibitem{B1}
{\sc D.~G. Bhimani, R.~Manna, F.~Nicola, S.~Thangavelu, and S.~I. Trapasso},
  {\em Phase space analysis of the {H}ermite semigroup and applications to
  nonlinear global well-posedness}, Adv. Math., 392 (2021), p.~Paper No.
  107995.

\bibitem{bhimani2016functions}
{\sc D.~G. Bhimani and P.~K. Ratnakumar}, {\em Functions operating on
  modulation spaces and nonlinear dispersive equations}, J. Funct. Anal., 270
  (2016), pp.~621--648.

\bibitem{Dia1}
{\sc J.~Bona and M.~Dai}, {\em Norm-inflation results for the {BBM} equation},
  J. Math. Anal. Appl., 446 (2017), pp.~879--885.

\bibitem{bona1981evaluation}
{\sc J.~Bona, W.~Pritchard, and L.~Scott}, {\em An evaluation of a model
  equation for water waves}, Philosophical Transactions of the Royal Society of
  London. Series A, Mathematical and Physical Sciences, 302 (1981),
  pp.~457--510.

\bibitem{bona1983comparison}
{\sc J.~L. Bona, W.~Pritchard, and L.~R. Scott}, {\em A comparison of solutions
  of two model equations for long waves.}, tech. rep., Wisconsin Univ-Madison
  Mathematics Research Center, 1983.

\bibitem{Bona1}
{\sc J.~L. Bona and N.~Tzvetkov}, {\em Sharp well-posedness results for the
  {BBM} equation}, Discrete Contin. Dyn. Syst., 23 (2009), pp.~1241--1252.

\bibitem{carles2012geometric}
{\sc R.~Carles, E.~Dumas, and C.~Sparber}, {\em Geometric optics and
  instability for {NLS} and {D}avey-{S}tewartson models}, J. Eur. Math. Soc.
  (JEMS), 14 (2012), pp.~1885--1921.

\bibitem{feichtinger1983modulation}
{\sc H.~G. Feichtinger}, {\em Modulation spaces on locally compact {A}belian
  groups}.
\newblock Technical Report, University of Vienna, 1983, and in “Wavelets and
  Their Applications” (eds. M. Krishna, R. Radha and S. Thangavelu), 99-140,
  Allied Publishers, New Delhi, 2003., 1983.
\newblock Available on
  {researchgate.net}.

\bibitem{JH}
{\sc J.~Forlano and T.~Oh}, {\em Normal form approach to the one-dimensional
  cubic nonlinear schr\''{o}dinger equation in fourier-amalgam spaces},
  preprint.

\bibitem{Forlano1}
{\sc J.~Forlano and M.~Okamoto}, {\em A remark on norm inflation for nonlinear
  wave equations}, Dyn. Partial Differ. Equ., 17 (2020), pp.~361--381.

\bibitem{grochenig2013foundations}
{\sc K.~Gr\"{o}chenig}, {\em Foundations of time-frequency analysis}, Applied
  and Numerical Harmonic Analysis, Birkh\"{a}user Boston, Inc., Boston, MA,
  2001.

\bibitem{iwabuchi2015ill}
{\sc T.~Iwabuchi and T.~Ogawa}, {\em {Ill-posedness for the nonlinear
  {S}chr{\"o}dinger equation with quadratic non-linearity in low dimensions}},
  Transactions of the American Mathematical Society, 367 (2015),
  pp.~2613--2630.

\bibitem{kishimoto2019remark}
{\sc N.~Kishimoto}, {\em {A remark on norm inflation for nonlinear
  {S}chr{\"o}dinger equations}}, Communications on Pure \& Applied Analysis, 18
  (2019), p.~1375.

\bibitem{Lebeau05}
{\sc G.~Lebeau}, {\em Perte de r\'egularit\'e pour les \'equations d'ondes
  sur-critiques}, Bull. Soc. Math. France, 133 (2005), pp.~145--157.

\bibitem{oh2017remark}
{\sc T.~Oh}, {\em A remark on norm inflation with general initial data for the
  cubic nonlinear {S}chr\"{o}dinger equations in negative {S}obolev spaces},
  Funkcial. Ekvac., 60 (2017), pp.~259--277.

\bibitem{Panthee1}
{\sc M.~Panthee}, {\em On the ill-posedness result for the {BBM} equation},
  Discrete Contin. Dyn. Syst., 30 (2011), pp.~253--259.

\bibitem{pava2011stability}
{\sc J.~A. Pava, C.~Banquet, and M.~Scialom}, {\em Stability for the modified
  and fourth-order benjamin-bona-mahony equations}, Discrete \& Continuous
  Dynamical Systems, 30 (2011), p.~851.

\bibitem{ruzhansky2012modulation}
{\sc M.~Ruzhansky, M.~Sugimoto, and B.~Wang}, {\em Modulation spaces and
  nonlinear evolution equations}, in Evolution equations of hyperbolic and
  {S}chr\"{o}dinger type, vol.~301 of Progr. Math., Birkh\"{a}user/Springer
  Basel AG, Basel, 2012, pp.~267--283.

\bibitem{RuzhanskyB}
{\sc M.~Ruzhansky and V.~Turunen}, {\em Pseudo-differential operators and
  symmetries}, vol.~2 of Pseudo-Differential Operators. Theory and
  Applications, Birkh\"{a}user Verlag, Basel, 2010.
\newblock Background analysis and advanced topics.

\bibitem{wang2007global}
{\sc B.~Wang and H.~Hudzik}, {\em The global {C}auchy problem for the {NLS} and
  {NLKG} with small rough data}, J. Differential Equations, 232 (2007),
  pp.~36--73.

\bibitem{wang2011harmonic}
{\sc B.~Wang, Z.~Huo, C.~Hao, and Z.~Guo}, {\em Harmonic analysis method for
  nonlinear evolution equations. {I}}, World Scientific Publishing Co. Pte.
  Ltd., Hackensack, NJ, 2011.

\end{thebibliography}
\end{document}